\newtheorem{definition}{Definition}[section]
\newtheorem{proposition}[definition]{Proposition}
\newtheorem{theorem}[definition]{Theorem}
\newtheorem{corollary}[definition]{Corollary}
\newtheorem{conjecture}[definition]{Conjecture}
\begin{document}

\baselineskip=7mm

\begin{center}

\vspace*{5mm}

{\Large \bf Completely Independent Spanning Trees in Line Graphs} 

\vspace*{25mm}

{\large Toru Hasunuma}

\vspace*{10mm}

{Department of Mathematical Sciences, \\
Tokushima University, \\
2--1 Minamijosanjima, Tokushima 770--8506 Japan \\
{\tt hasunuma@tokushima-u.ac.jp}
}

\end{center}

\vspace*{10mm}

\baselineskip=5.5mm

\noindent\textbf{Abstract:}
Completely independent spanning trees in a graph $G$ are spanning trees of $G$ such that
for any two distinct vertices of $G$, the paths between them in the spanning trees are 
pairwise edge-disjoint and internally vertex-disjoint.
In this paper, we present a tight lower bound on the maximum number of completely independent 
spanning trees in $L(G)$, where $L(G)$ denotes the line graph of a graph $G$. 
Based on a new characterization of a graph with $k$ completely independent spanning trees,
we also show that for any complete graph $K_n$ of order $n \geq 4$, there are 
$\lfloor \frac{n+1}{2} \rfloor$ completely independent spanning trees in $L(K_n)$
where the number $\lfloor \frac{n+1}{2} \rfloor$ is optimal, 
such that $\lfloor \frac{n+1}{2} \rfloor$ completely independent spanning trees still exist
in the graph obtained from $L(K_n)$ by deleting any vertex (respectively, any induced path 
of order at most $\frac{n}{2}$) for $n = 4$ or odd $n \geq 5$ (respectively, even $n \geq 6$). 
Concerning the connectivity and the number of completely independent spanning trees,
we moreover show the following, where $\delta(G)$ denotes the minimum degree of $G$. 
\begin{itemize}
\item Every $2k$-connected line graph $L(G)$ has $k$ completely independent spanning trees 
if $G$ is not super edge-connected or $\delta(G) \geq 2k$.
\item Every $(4k-2)$-connected line graph $L(G)$ has $k$ completely independent spanning trees 
if $G$ is regular.
\item Every $(k^2+2k-1)$-connected line graph $L(G)$ with $\delta(G) \geq k+1$ has $k$ completely independent spanning trees. 
\end{itemize}

\vspace*{5mm}

\noindent\textbf{Keywords:}
complete graphs; 
completely independent spanning trees;
connectivity;
line graphs

\newpage

\bigskip

\baselineskip=7mm

\section{Introduction}

For two sets $A$ and $B$,
$A \setminus B$ denotes the set difference $\{x\ |\ x \in A, x \not\in B \}$.
Throughout the paper, a graph $G=(V,E)$ means a simple undirected graph.
The degree of a vertex $v$ in $G$ is denoted by ${\rm deg}_G(v)$.
Let $\delta(G) = \min_{v \in V(G)}{\rm deg}_G(v)$.
We denote by $\kappa(G)$ and $\lambda(G)$ the connectivity and the edge-connectivity of $G$,
respectively. 
For every graph $G$, it holds that 
$\kappa(G) \leq \lambda(G) \leq \delta(G)$. 
A graph $G$ is $k$-connected (respectively, $k$-edge-connected) 
if $\kappa(G) \geq k$ (respectively, $\lambda(G) \geq k$).
For $S \subsetneq V(G)$ or $S \subseteq E(G)$, 
$G-S$ denotes the graph obtained from $G$ by deleting every element in $S$. 
When $S$ consists of only one element $s$, $G-S$ may be abbreviated to $G-s$.
For a nonempty subset $S$ of $V(G)$, the subgraph of $G$ induced by $S$ is denoted by 
$\langle S \rangle_G$, i.e., $\langle S \rangle_G = G - (V(G) \setminus S)$.
When $\langle S \rangle_G$ is a path, $\langle S \rangle_G$ is called an induced path of $G$. 
An edge-cut of a connected graph $G$ is a subset $F \subseteq E(G)$ such that $G-F$ is disconnected.
A connected graph $G$ is called {\em super edge-connected} if every minimum edge-cut
isolates a vertex.
The line graph $L(G)$ of $G$ is the graph with $V(L(G))=E(G)$ in which two vertices are adjacent
if and only if their corresponding edges are adjacent, i.e., they are incident to 
a common vertex in $G$.

{\em Completely independent spanning trees} in a graph $G$
are spanning trees of $G$ such that for any two distinct vertices $u, v$ of $G$,
the paths between $u$ and $v$ in the spanning trees  
mutually have no common edge and no common vertex except for $u$ and $v$. 
Completely independent spanning trees find applications in fault-tolerant broadcasting 
problems \cite{H01} and protection routings \cite{PCC} in communication networks. 
Motivated by these applications, completely independent spanning trees have been widely
studied for interconnection networks (e.g., see \cite{CCW,CPHYC,HM,PCC2}).
From a theoretical point of view, 
sufficient conditions for Hamiltonian graphs such as Dirac's condition and Ore's condition
were shown to be also sufficient for a graph to have two completely independent spanning trees
\cite{Ar,FHL}; moreover, Dirac's condition were 
generalized to minimum degree conditions for a graph to have $k$ completely independent 
spanning trees \cite{H15,HL}.
From an algorithmic point of view, it has been proved that
the problem of deciding whether a given graph has two completely independent spanning trees 
is NP-complete \cite{H02}.

Let $\tau(G)$ denote the maximum number of edge-disjoint spanning trees in $G$.
Let $S \subsetneq V(G)$.
For $v \in S$, let 
$$f_{S}(v) = {\rm deg}_G(v) -{\rm deg}_{\langle S \rangle_G}(v).$$
Define $\zeta(S)$ as follows: 
$$\zeta(S) = \left\{ \begin{array}{ll}
\min_{uv \in E(\langle S \rangle_G)}f_{S}(u)+f_{S}(v)  & \mbox{ if } 
S \neq \emptyset \mbox{ and } E(\langle S \rangle_G) \neq \emptyset, \\
\infty & \mbox{ otherwise.}
\end{array} \right. $$
A star is a tree which has at most one vertex with degree at least two.
If $S = \emptyset$ or every component of $\langle S \rangle_G$ 
is a star, then $S$ is called a {\it star-subset} of $V(G)$.
Let ${\mathcal F}(V(G))$ denote the family of star-subsets of $V(G)$.
Define $\tau'(G)$ as follows: 
$$\tau'(G) = \max_{S \in {\mathcal F}(V(G))}\min\{\tau(G-S), \zeta(S)\}.$$
Since $\min\{\tau(G-\emptyset),\zeta(\emptyset)\} = \tau(G)$, 
it holds that $\tau'(G) \geq \tau(G)$ for any graph $G$.  
Let $\tau^\ast(G)$ denote the maximum number of completely independent spanning trees in $G$.
In this paper, we show the following. 

\begin{theorem} \label{main-0}
Every line graph $L(G)$ has $\tau'(G)$ 
completely independent spanning trees, i.e., 
$\tau^\ast(L(G)) \geq \tau'(G)$. 
There exists a graph $G$ with $\tau'(G) > \tau(G)$ such that
$\tau^\ast(L(G)) = \tau'(G)$. 
Moreover, there exists a graph $G$ such that 
$\tau^\ast(L(G)) = \tau(G)$.
\end{theorem}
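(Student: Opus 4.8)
The plan is to treat the three assertions separately: the inequality $\tau^\ast(L(G))\ge\tau'(G)$ is the substantial claim, and the two existence statements are tightness examples that I would verify by exhibiting explicit graphs. Throughout I would use the standard characterization that spanning trees $T_1,\dots,T_k$ of a graph $H$ are completely independent if and only if they are pairwise edge-disjoint and no vertex of $H$ is an internal (non-leaf) vertex of more than one of them. Writing $k=\tau'(G)$, fix a star-subset $S$ attaining the maximum, so that $G-S$ has $k$ edge-disjoint spanning trees $F_1,\dots,F_k$ and, whenever $\langle S \rangle_G$ has an edge, $f_S(u)+f_S(v)\ge k$ for every edge $uv$ of $\langle S \rangle_G$. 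The observation that drives everything is that edge-disjointness in $G$ becomes \emph{vertex}-disjointness in $L(G)$: since no edge of $G$ lies in two of the $F_i$, the vertex sets $E(F_1),\dots,E(F_k)$ of $L(G)$ are pairwise disjoint, so if each $T_i$ is built with its internal vertices essentially inside $E(F_i)$, the ``internal in at most one tree'' condition is automatic.

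For the construction, for each $i$ I would take the subgraph of $L(G)$ induced by $E(F_i)$; this is connected because $F_i$ is a connected spanning subgraph of $G-S$, and a spanning tree of it serves as the backbone of $T_i$. The remaining vertices of $L(G)$ --- edges of $G-S$ outside $F_i$, edges crossing from $S$ to $V(G-S)$, and edges lying inside $S$ --- are hung on this backbone. Any edge of $G-S$ or crossing edge shares an endpoint $w\in V(G-S)$ with some $F_i$-edge (as $F_i$ spans $V(G-S)$) and is attached to it as a leaf; using distinct $F_i$-edges in distinct trees keeps these attachments edge-disjoint, and the attached vertex is a leaf in every tree. The delicate vertices are the \emph{inside-$S$} edges, and here the star hypothesis is essential. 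In a star component of $\langle S \rangle_G$ with centre $c$ and leaves $\ell_1,\dots,\ell_m$, all inside-$S$ edges $c\ell_j$ are mutually adjacent in $L(G)$, so a single crossing edge $cw$ at $c$ can serve as one internal bridge carrying all of $c\ell_1,\dots,c\ell_m$ to the backbone at once; the $f_S(c)$ crossing edges at $c$ handle $f_S(c)$ of the trees this way. For the remaining $k-f_S(c)$ trees I would bridge each $c\ell_j$ separately through a crossing edge at $\ell_j$, which is possible precisely because $f_S(\ell_j)\ge k-f_S(c)$, i.e. $f_S(c)+f_S(\ell_j)\ge k=\zeta(S)$. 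Choosing a distinct crossing edge as the bridge in each tree makes every crossing edge an internal bridge in at most one $T_i$, while each $c\ell_j$ stays a leaf everywhere.

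The main obstacle is the verification that this recipe simultaneously produces $k$ spanning trees and meets both completely-independent conditions. I expect the bookkeeping to be the crux: one must confirm that the backbones are edge-disjoint (immediate from $E(F_i)\cap E(F_j)=\emptyset$), that no edge of $L(G)$ is reused across two trees by the leaf- and bridge-attachments, and --- most importantly --- that the only internal vertices of $T_i$ are its backbone edges and the crossing edges chosen as its bridges, each of which is internal in no other tree. Granting the characterization, these checks yield $\tau^\ast(L(G))\ge k=\tau'(G)$.

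For the two existence claims I would control $\tau^\ast(L(G))$ from above using $\tau^\ast(H)\le\tau(H)$ (every completely independent family is edge-disjoint) together with Nash--Williams edge counting. For the first claim take $G=K_4$ together with one pendant vertex $v$: deleting the one-vertex star-subset $S=\{v\}$ (for which $\zeta(S)=\infty$) gives $\tau(K_4)=2$, so $\tau'(G)\ge 2$, while the pendant forces $\tau(G)=1$, and one checks $\tau'(G)=2>1=\tau(G)$. Since $L(G)$ has $7$ vertices but only $15$ edges, three edge-disjoint spanning trees (needing $18$ edges) cannot exist, so $\tau^\ast(L(G))\le\tau(L(G))\le 2$; with the lower bound this gives $\tau^\ast(L(G))=\tau'(G)$. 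For the second claim a cycle $C_n$ is a self-contained witness: $L(C_n)=C_n$, whose $n$ edges cannot support two edge-disjoint spanning trees, so $\tau^\ast(L(C_n))=1=\tau(C_n)$; the complete graph $K_4$, where $\tau^\ast(L(K_4))=2=\tau(K_4)$ by the optimality of $\lfloor(n+1)/2\rfloor$ at $n=4$, is an alternative tied to the paper's main result.
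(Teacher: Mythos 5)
Your overall strategy for the main inequality is the same as the paper's: fix a star-subset $S$ attaining $\tau'(G)$, use edge-disjoint spanning trees $F_1,\dots,F_k$ of $G-S$ as vertex-disjoint backbones in $L(G)$, hang every other vertex of $L(G)$ as a leaf, and absorb the inside-$S$ edges through crossing edges at their two endpoints, which is exactly where $f_S(u)+f_S(v)\ge k$ enters (your per-star-component bridging coincides with the paper's per-edge rule, since all edges of a star component share the centre). Your tightness examples differ from the paper's ($K_4$ plus a pendant vertex instead of the paper's family built from $K_{4k}$, and a cycle instead of $L(K_n)$ for even $n \ge 6$ bounded via connected domination), but both of your examples are correct and are verified by elementary edge counting; they are, if anything, simpler.

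However, there is a genuine gap at the step you yourself flag as ``the crux.'' Your claim that ``using distinct $F_i$-edges in distinct trees keeps these attachments edge-disjoint'' rules out only one kind of conflict: two attachments of the \emph{same} hung vertex in two different trees. It does not rule out the symmetric conflict between backbone vertices of different trees: the vertex $f'\in E(F_j)$ must be hung as a leaf of $T_i$ on some $f\in E(F_i)$, and the vertex $f\in E(F_i)$ must be hung as a leaf of $T_j$ on some $f''\in E(F_j)$; if $f''=f'$, then the single edge $\{f,f'\}$ of $L(G)$ is used in both $T_i$ and $T_j$, violating edge-disjointness, and no ``distinct parents'' argument prevents this since the two attachments concern different vertices. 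What is needed is a simultaneous, conflict-free choice of parents in the bipartite graph $\langle E(F_i),E(F_j)\rangle_{L(G)}$, i.e., one incident edge per vertex with no edge chosen from both of its endpoints; such a choice exists in a component if and only if the component contains a cycle (a tree-component has one fewer edge than vertices, so pigeonhole fails). This is precisely the role of the cycle condition in the paper's characterization (Theorem \ref{char-3}) and of the outdegree-one orientation in its proof, and it is available here because every vertex of $\langle E(F_i),E(F_j)\rangle_{L(G)}$ has degree at least two: since $V(F_i)=V(F_j)$ and $E(F_i)\cap E(F_j)=\emptyset$, each edge of one tree meets at least two edges of the other, so every component has a cycle. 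Your proposal never makes this observation, so the pairwise edge-disjointness of the $k$ constructed trees remains unproven; adding it would complete your argument along essentially the paper's lines.
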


For any complete graph $K_n$ of order $n \geq 4$, 
Wang et al. \cite{Wang-} recently presented
an algorithm to construct 
$\lfloor \frac{n+1}{2} \rfloor$ completely independent spanning trees in $L(K_n)$, 
where the number $\lfloor \frac{n+1}{2} \rfloor$
of completely independent spanning trees is optimal, i.e., $\tau^\ast(L(K_n)) = \lfloor \frac{n+1}{2} \rfloor$.
They also implemented their algorithm to verify its validity. 
Based on a new characterization of a graph with $k$ completely independent spanning trees, 
we show the following result stronger than the result of Wang et al. by presenting a direct proof. 

\begin{theorem} \label{main-01} 
For any $n \geq 4$, there are $\lfloor \frac{n+1}{2} \rfloor$ completely independent 
spanning trees in $L(K_n)$ such that 
for $n = 4$ or any odd $n \geq 5$ and any vertex $v$ of $L(K_n)$
(respectively, any even $n \geq 6$ and any induced path $P$ of order at most $\frac{n}{2}$ of $L(K_n)$), 
$\lfloor \frac{n+1}{2} \rfloor$ completely independent spanning trees still exist
in $L(K_n)-v$ (respectively, $L(K_n)-V(P)$).
\end{theorem}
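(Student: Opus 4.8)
The plan is to construct the trees explicitly and then certify them by the standard reformulation: spanning trees $T_1,\dots,T_k$ are completely independent if and only if they are pairwise edge-disjoint and no vertex is an inner (non-leaf) vertex of more than one of them. So it suffices to exhibit $k=\lfloor\frac{n+1}{2}\rfloor$ edge-disjoint spanning trees of $L(K_n)$ whose sets of inner vertices are pairwise disjoint. I would organize everything around the clique structure of $L(K_n)$. Writing $V(K_n)=[n]$, the edges through a fixed vertex $i$ form a clique $Q_i\cong K_{n-1}$ in $L(K_n)$; each vertex $\{i,j\}$ lies in exactly the two cliques $Q_i,Q_j$; every edge of $L(K_n)$ lies inside some $Q_i$; and $L(K_n)=\bigcup_i Q_i$. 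The guiding idea is to base each tree on one or two of these cliques, using a clique vertex as a hub that dominates the remaining vertices, so that the inner vertices of a tree are confined to its base cliques.

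For even $n=2m$ the base existence is already cheap: $K_n$ decomposes into $m=\tau(K_n)$ edge-disjoint spanning trees, and since $\tau'(K_n)=\tau(K_n)=m$, Theorem~\ref{main-0} already yields $m$ completely independent spanning trees in $L(K_n)$. The work for even $n$ is therefore to make this family explicit and symmetric enough to survive deletions. Concretely I would pair the cliques as $(Q_1,Q_2),\dots,(Q_{2m-1},Q_{2m})$ and let $T_a$ be a \emph{double broom}: take the shared vertex $\{2a-1,2a\}$ as a bridge, hang off it the two fans consisting of the vertices $\{2a-1,x\}$ and $\{2a,x\}$ ($x\notin\{2a-1,2a\}$), and attach each remaining vertex $\{x,y\}$ with $x,y\notin\{2a-1,2a\}$ to one of its four neighbours in $Q_{2a-1}\cup Q_{2a}$. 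For odd $n=2m+1$ the situation is genuinely harder, since here $\lfloor\frac{n+1}{2}\rfloor=m+1$ strictly exceeds $\tau'(K_n)=m$, so Theorem~\ref{main-0} cannot supply the last tree; I would build $m+1$ single-hub brooms on $m+1$ chosen cliques and use the leftover near-perfect matching of $K_n$ (the $m$ edges not covered by the $m$ spanning trees) to furnish the extra tree.

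The main obstacle in both cases is the \emph{cross vertices}. A vertex such as $\{2a-1,2b-1\}$ lies in a base clique of both $T_a$ and $T_b$, so it is a candidate inner vertex of two trees at once, and the edges inside a clique $Q_i$ are shared among all trees based on $Q_i$. I would fix a single linear order on $[n]$ and let it decide, uniformly, into which one tree each cross vertex is allowed to become internal and along which edge it attaches to its hub; the content of the argument is to check that this one rule simultaneously produces spanning trees, keeps them edge-disjoint, and keeps the inner-vertex sets disjoint. This is where the exact value $\lfloor\frac{n+1}{2}\rfloor$ is forced and where the bookkeeping is heaviest, and the new characterization of graphs with $k$ completely independent spanning trees is exactly the device that lets me certify the family by displaying the inner-vertex partition and checking edge-disjointness clique by clique rather than path by path.

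For the fault-tolerance statements I would not redo the existence proof but show that the constructed family can be repaired locally. For $n=4$ or odd $n\ge5$, I would argue that a single deleted vertex $v$ is either already a leaf of every tree or can be turned into one by a local re-routing that exploits the abundance of leaves and the vertex-transitivity of $L(K_n)$, so that the repaired family restricts to $k$ completely independent spanning trees of $L(K_n)-v$. For even $n\ge6$ the extra slack coming from the full tree-decomposition of $K_n$ lets me absorb an entire induced path at once; bounding how many cliques $Q_i$ an induced path of $L(K_n)$ can meet is precisely what produces the order bound $\frac n2$. I expect this last case — deleting several vertices coherently across all $m$ trees simultaneously — to be the most delicate step once the basic construction is in place.
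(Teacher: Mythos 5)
Your reduction to Theorem \ref{char-1} (pairwise edge-disjoint trees with pairwise disjoint internal-vertex sets) is the right starting point, and you correctly identify that the odd case is where Theorem \ref{main-0} cannot supply the last tree; but both of your concrete constructions fail. The even-case double broom is already incompatible with edge-disjointness: in your $T_a$ the hub $\{2a-1,2a\}$ is joined to \emph{every} vertex of $(Q_{2a-1}\cup Q_{2a})\setminus\{\{2a-1,2a\}\}$, and these are exactly all the neighbours of $\{2a-1,2a\}$ in $L(K_n)$. Hence $T_a$ uses every one of the $2(n-2)$ edges incident with its hub, and any other spanning tree $T_b$ --- which must contain the vertex $\{2a-1,2a\}$, being spanning --- is forced to reuse one of them. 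No attachment rule for the cross vertices can repair this; the hubs themselves are the obstruction. The odd case is worse. The internal vertices of a spanning tree of $L(K_n)$ form a connected dominating set, and $\gamma_{\rm c}(L(K_n))=n-2$; a connected dominating set contained in a single clique $Q_i$ must contain all but at most one vertex of $Q_i$ (if $\{i,x\}$ and $\{i,y\}$ are both missing, the vertex $\{x,y\}$ is undominated). Your $m+1$ single-hub brooms would therefore sit in $m+1$ cliques $Q_{i_1},\dots,Q_{i_{m+1}}$ which pairwise share the $\frac{m(m+1)}{2}$ vertices $\{i_j,i_l\}$, each of which must be omitted from at least one of the two brooms meeting it, while only $m+1$ omissions (one per clique) are available in total; this is impossible once $m\geq 3$, i.e., for every odd $n\geq 7$. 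Moreover, the leftover near-perfect matching of $K_n$ is an \emph{independent} set of vertices in $L(K_n)$, so it induces no edges at all and cannot ``furnish'' the extra tree in any sense.

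The paper's construction avoids exactly these traps by letting each internal-vertex set spread across many cliques: for $n\geq 7$ it takes the edge sets of $\lfloor \frac{n}{2}\rfloor$ edge-disjoint zig-zag Hamiltonian paths $T_i$ of $K_n - v_{i+\lfloor \frac{n+1}{2}\rfloor}$ (plus, for odd $n$, one further tree assembled from the leftover edges), and verifies via the new characterization (Theorem \ref{char-3}) that every component of $\langle E(T_i),E(T_j)\rangle_{L(K_n)}$ contains a cycle. The fault tolerance is then \emph{not} obtained by the local re-routing you envisage: it comes for free from Proposition \ref{CIST-fc}, because these dominating sets leave uncovered exactly one vertex of $L(K_n)$ when $n$ is odd, and an induced path of order $\frac{n}{2}$ when $n$ is even, and the symmetry of $K_n$ maps any prescribed vertex, respectively any induced path of order at most $\frac{n}{2}$, onto that uncovered set. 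Any successful write-up along your lines would need (i) trees whose internal-vertex sets cross many cliques rather than being confined to one or two, and (ii) a construction that deliberately leaves the right set of vertices external to all trees --- which is essentially the paper's design.
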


The notion of completely independent spanning trees was introduced in \cite{H01} by
strengthening the notion of independent spanning trees. 
Independent spanning trees have a specific vertex called the root and focus only on  
the paths between the root and any other vertex.
Thus, completely independent spanning trees are independent spanning trees rooted at any given vertex.
For independent spanning trees, the following conjecture is well-known. 

\begin{conjecture}
For any $k \geq 2$, 
every $k$-connected graph has $k$ independent spanning trees rooted at any vertex.
\end{conjecture}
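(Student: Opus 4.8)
The plan is to proceed by induction on the connectivity $k$, imitating the only route that is known to succeed for the small cases. For $k=2$ the statement follows from an $st$-numbering (equivalently, an open ear decomposition) of a $2$-connected graph: order the vertices $r=v_1,v_2,\ldots,v_n$ so that every internal $v_i$ has a neighbour earlier and a neighbour later, and let one tree follow the ``decreasing'' edges toward $r$ and the other the ``increasing'' edges. For $k=3$ one replaces this by a \emph{nonseparating ear decomposition} (Cheriyan--Maheshwari, Zehavi--Itai), and for $k=4$ by the far more delicate structural analysis of Curran--Lee--Yu. So the first thing I would do is look for the generative decomposition of a general $k$-connected graph: a sequence of vertex- or ear-additions, starting from a small $k$-connected core, each step preserving $k$-connectivity, along which the $k$ trees can be grown while maintaining internal vertex-disjointness of the $r$--$v$ paths.

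Granting such a decomposition, the second step is the extension lemma. I would maintain as an invariant that, after the first $i$ vertices have been processed, the partial trees restricted to $\{v_1,\ldots,v_i\}$ already form $k$ independent spanning trees of the induced subgraph rooted at $r$. When a new vertex $w$ (of degree at least $k$) is attached, Menger's theorem supplies $k$ internally disjoint paths from $w$ into the already-built structure; the task is to assign these $k$ attachment points to the $k$ trees so that $w$ acquires exactly one parent in each tree and no previously established disjointness is destroyed. This assignment step is where all of the combinatorial content lives, and it is precisely the heart of the Itai--Rodeh and Zehavi--Itai arguments.

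The decisive obstacle --- and the reason this statement is in fact a long-standing \emph{open} conjecture rather than a theorem, settled only for $k \leq 4$ --- is that no suitable generative decomposition is known for $k \geq 5$. Open ear decompositions characterise exactly the $2$-connected graphs and nonseparating ear decompositions handle $k=3$, but there is no analogous ``build every $k$-connected graph by $k$-connectivity-preserving additions'' theorem for larger $k$, and the $k=4$ case already forced an intricate, essentially non-inductive case analysis that does not obviously generalise. Moreover, unlike the edge-disjoint case, which is governed by the Nash-Williams--Tutte min-max theorem and enters this paper through $\tau(G)$ in Theorem~\ref{main-0}, there is no known Menger-type min-max characterisation for \emph{rooted vertex-independent} spanning trees to invoke directly. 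A genuine proof for general $k$ would therefore require either such a structural decomposition or a fundamentally global argument --- for example via the directed reformulation and submodular-flow or matroid techniques --- and producing one of these is exactly the open part of the problem.
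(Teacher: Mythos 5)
The statement you were asked about is stated in the paper as a \emph{conjecture}, not a theorem: the paper gives no proof, noting only that it is known to hold for $k \leq 4$ and for planar graphs and remains open for general $k \geq 5$. Your proposal correctly recognizes exactly this --- your summary of the $st$-numbering, nonseparating ear decomposition, and Curran--Lee--Yu techniques for $k \leq 4$, and your identification of the missing $k$-connectivity-preserving decomposition as the obstruction for $k \geq 5$, is an accurate account of the state of the art and is consistent with the paper's own treatment of the statement as unproven.
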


This conjecture has been shown to be true for $k \leq 4$ (e.g., see \cite{CLY,OY})
and for planar graphs \cite{Huck,Huck2}; however, it remains open for general $k \geq 5$.
Unlike independent spanning trees, 
it has already been shown that there is no direct relationship 
between the connectivity and the number of completely independent spanning trees. 
Namely, P\'{e}terfalvi \cite{P12} proved that for any $k \geq 2$, 
there exists a $k$-connected graph which 
has no two completely independent spanning trees. 

By definition, independent spanning trees are not always edge-disjoint, 
while completely independent spanning trees are edge-disjoint. 
Thus, the notion of completely independent spanning trees is also stronger than 
the notion of edge-disjoint spanning trees.
For edge-disjoint spanning trees, the following theorem is well-known.

\begin{theorem} {\rm (Nash-Williams \cite{NW}, Tutte \cite{T})} \label{NWT}
For any $k \geq 2$, 
every $2k$-edge-connected graph has $k$ edge-disjoint spanning trees.
\end{theorem}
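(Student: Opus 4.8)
The plan is to obtain this statement as a corollary of the full, partition form of the Nash--Williams/Tutte theorem, so I would split the work into two parts: first establish the partition characterization, then verify its hypothesis for every $2k$-edge-connected graph. The characterization states that a graph $G$ has $k$ pairwise edge-disjoint spanning trees if and only if for every partition $\mathcal{P} = \{V_1, \dots, V_r\}$ of $V(G)$ into $r \geq 1$ nonempty classes, the number $e_G(\mathcal{P})$ of edges whose two endpoints lie in distinct classes satisfies $e_G(\mathcal{P}) \geq k(r-1)$.

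Granting the characterization, the reduction is short. Let $G$ be $2k$-edge-connected and fix a partition $\mathcal{P} = \{V_1, \dots, V_r\}$. If $r = 1$ the required bound $e_G(\mathcal{P}) \geq 0$ is trivial, so assume $r \geq 2$. Then each class $V_i$ is a nonempty proper subset of $V(G)$, so the edge cut $\partial_G(V_i)$ separating $V_i$ from $V(G) \setminus V_i$ has at least $\lambda(G) \geq 2k$ edges. The next step is a double-counting observation: summing these $r$ cut sizes counts every edge joining two distinct classes exactly twice, once at each endpoint, so $2\,e_G(\mathcal{P}) = \sum_{i=1}^{r} |\partial_G(V_i)| \geq 2kr$. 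Hence $e_G(\mathcal{P}) \geq kr \geq k(r-1)$, the partition condition holds for every $\mathcal{P}$, and the characterization yields $k$ edge-disjoint spanning trees.

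I expect the genuine obstacle to lie in the characterization itself rather than in this double-counting reduction. To prove the partition form from scratch I would apply the matroid-union theorem to the cycle matroid of $G$: the maximum number of edge-disjoint spanning trees equals $\min_{\mathcal{P}} \lfloor e_G(\mathcal{P})/(r-1) \rfloor$, the minimum being taken over partitions $\mathcal{P}$ of $V(G)$ into $r \geq 2$ classes, and this quantity is at least $k$ exactly when every partition satisfies $e_G(\mathcal{P}) \geq k(r-1)$. Deriving this min--max formula from matroid union, or equivalently by a direct forest-augmentation argument that either grows $k$ edge-disjoint forests into spanning trees or exhibits a violating partition, is the technical heart of the result. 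The edge-connectivity corollary sought here is then comfortably implied, since the reduction above in fact gives the stronger bound $e_G(\mathcal{P}) \geq kr$.
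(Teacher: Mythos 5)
Your reduction is correct, but note that the paper offers no proof of this statement to compare against: it is quoted as a classical result with citations to Nash--Williams and Tutte, and the partition (tree-packing) theorem you invoke \emph{is} precisely the content of those cited papers. Your derivation of the $2k$-edge-connected corollary from the partition form is the standard one and is sound: for a partition $\mathcal{P}=\{V_1,\dots,V_r\}$ with $r\geq 2$, each $\partial_G(V_i)$ is a cut of size at least $\lambda(G)\geq 2k$, each crossing edge is counted exactly twice in $\sum_{i=1}^{r}|\partial_G(V_i)|$, and edges inside a class are counted zero times, giving $e_G(\mathcal{P})\geq kr\geq k(r-1)$. The one caveat is that your proposal only sketches, rather than proves, the partition characterization itself (via matroid union or a forest-augmentation argument); you correctly identify this as the technical heart, and since the paper treats the whole theorem as a black box from the literature, relying on that characterization is entirely consistent with how the result is used here. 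In short: your argument is the textbook derivation, it is correct modulo the cited tree-packing theorem, and it is if anything more informative than the paper, which proves nothing at this point.
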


From the result of P\'{e}terfalvi, 
the corresponding statement for $k$ completely independent spanning trees 
in a $2k$-connected graph does not hold in general; 
however, it has been shown that such a statement holds 
if we restrict ourselves to specific graphs 
such as 4-connected maximal planar graphs \cite{H02}, 2-dimensional torus networks \cite{HM}
and augmented cubes with dimensions at most five \cite{Mane-}. 
As far as we know, no similar statement for general $k \geq 2$ 
has been shown even for specific graphs. 
Combining our lower bound on $\tau^\ast(L(G))$
with previously known results, 
we show the following results on $L(G)$ concerning the connectivity
and the number of completely independent spanning trees. 

\begin{theorem} \label{main-1}
For any $k \geq 2$, 
every $2k$-connected line graph $L(G)$ has $k$ completely independent spanning trees 
if $G$ is not super edge-connected or $\delta(G) \geq 2k$. 
\end{theorem}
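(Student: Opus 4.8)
The plan is to reduce the whole statement to a single edge-connectivity bound on $G$ and then invoke the two tools already at hand, Theorem~\ref{main-0} and Theorem~\ref{NWT}. Concretely, I would show that under either hypothesis one has $\lambda(G) \geq 2k$. Once this is established, Theorem~\ref{NWT} gives $\tau(G) \geq k$; since $\min\{\tau(G-\emptyset),\zeta(\emptyset)\} = \tau(G)$ we obtain $\tau'(G) \geq \tau(G) \geq k$, and Theorem~\ref{main-0} then yields $\tau^\ast(L(G)) \geq \tau'(G) \geq k$. Thus the argument would be carried entirely by the empty star-subset $S=\emptyset$, and the real content is the passage from the connectivity of $L(G)$ to the edge-connectivity of $G$.

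The key lemma I would prove first is: if $G$ is not super edge-connected, then $\kappa(L(G)) \leq \lambda(G)$. The proof rests on the observation that deleting a vertex set $F \subseteq E(G)$ from $L(G)$ produces exactly $L(G-F)$ (edge-deletion in $G$, with adjacencies preserved), and that $L(G-F)$ is disconnected precisely when the edges of $G-F$ split into at least two vertex-disjoint parts. I would take a minimum edge-cut $F$ of $G$ with $|F|=\lambda(G)$ that does not isolate a vertex, which exists since $G$ is not super edge-connected. Letting $A$ be the vertex set of one component of $G-F$ and $B = V(G)\setminus A$, minimality forces $E_G(A,B)=F$, where $E_G(A,B)$ denotes the set of edges with one end in $A$ and the other in $B$. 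Because $F$ isolates no vertex, every component of $G-F$ has at least two vertices and hence contains an edge, so both $\langle A \rangle_G$ and $\langle B \rangle_G$ contain an edge. Consequently the $L(G)$-vertices corresponding to edges inside $A$ and those inside $B$ are nonempty and lie in distinct components of $L(G-F)$, so $F$ is a vertex-cut of $L(G)$ and $\kappa(L(G)) \leq |F| = \lambda(G)$.

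With this lemma I would finish by a short case analysis, deducing $\lambda(G)\geq 2k$ in each case. If $G$ is not super edge-connected, then $2k \leq \kappa(L(G)) \leq \lambda(G)$ directly. If instead $\delta(G)\geq 2k$, I split further: when $G$ is super edge-connected, every minimum edge-cut isolates a vertex, which forces $\lambda(G)=\delta(G)\geq 2k$ (the isolated vertex has degree $\lambda(G)$, and $\delta(G)\leq \lambda(G)\leq\delta(G)$); when $G$ is not super edge-connected we are back in the previous case and again $\lambda(G)\geq 2k$. Either way $\lambda(G)\geq 2k$, and the reduction described in the first paragraph completes the proof.

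I expect the main obstacle to be the clean justification of the lemma, in particular the correspondence $L(G)-F = L(G-F)$ together with the upgrade from ``$F$ does not isolate a vertex'' to ``both sides of the cut carry an edge'', which is what makes the cut visible as a vertex-cut of $L(G)$. This hinges on the standard but essential point that a minimum edge-cut yields components each of which is connected, hence carries an edge once it has at least two vertices. Some care is also needed to rule out degenerate small cases where $L(G)$ has too few vertices to be disconnected, but these are excluded automatically by $\kappa(L(G)) \geq 2k$ with $k \geq 2$, which forces $G$ to have at least $2k+1$ edges.
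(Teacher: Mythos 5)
Your proof is correct, and its skeleton matches the paper's: both arguments reduce Theorem~\ref{main-1} to the bound $\lambda(G) \geq 2k$, pack $k$ edge-disjoint spanning trees in $G$, and lift them to completely independent spanning trees of $L(G)$ via Theorem~\ref{main-0} (the paper uses its Corollary~\ref{line-com}, which is the $S=\emptyset$ case of your reduction). The difference lies in how the two supporting pillars are obtained. For the passage from $\kappa(L(G))$ to $\lambda(G)$, the paper cites the Hellwig--Rautenbach--Volkmann equality $\kappa(L(G)) = \lambda_{2,2}(G)$ (Theorem~\ref{con-L}) and combines it with the observation that $\lambda(G) = \lambda_{2,2}(G)$ when $G$ is not super edge-connected; you instead prove from scratch exactly the one inequality needed --- a minimum edge-cut that isolates no vertex leaves two connected components each containing an edge, hence corresponds to a vertex-cut of $L(G)$, giving $\kappa(L(G)) \leq \lambda(G)$ --- which is in effect the easy direction of Theorem~\ref{con-L} specialized to this situation, so your write-up is more elementary and self-contained. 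Your case analysis (super edge-connected $\Rightarrow$ every minimum cut is the edge set at a vertex $\Rightarrow \lambda(G) = \delta(G) \geq 2k$; otherwise apply the lemma) coincides with the paper's. For the tree-packing step you invoke Nash-Williams/Tutte (Theorem~\ref{NWT}) where the paper uses Catlin's characterization (Theorem~\ref{char-edge-con}); the two coincide when no edges are deleted, so your choice suffices for Theorem~\ref{main-1}, but the paper's stronger tool is what yields, at no extra cost, the fault-tolerant strengthening (Theorems~\ref{S5-main1} and \ref{ext}: the $k$ completely independent spanning trees survive deletion of any set of at most $k$ vertices of $L(G)$), which your route does not produce.
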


\begin{theorem} \label{main-3}
For any $k \geq 2$, 
every $(k^2+2k-1)$-connected line graph $L(G)$ with $\delta(G) \geq k+1$
has $k$ completely independent spanning trees.  
\end{theorem}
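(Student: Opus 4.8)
The plan is to apply Theorem~\ref{main-0} and reduce the assertion to the structural inequality $\tau'(G) \geq k$, i.e.\ to producing a star-subset $S \in {\mathcal F}(V(G))$ with $\min\{\tau(G-S),\zeta(S)\} \geq k$. The first observation is that, under $\delta(G) \geq k+1$, the requirement $\zeta(S) \geq k$ is free for \emph{every} star-subset: if $E(\langle S\rangle_G)\neq\emptyset$, then each of its edges lies in a star and hence has a leaf endpoint $v$, for which $f_S(v) = {\rm deg}_G(v)-1 \geq \delta(G)-1 \geq k$, so $f_S(u)+f_S(v)\geq k$; and if $E(\langle S\rangle_G)=\emptyset$ then $\zeta(S)=\infty$. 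Hence it suffices to find a star-subset $S$ with $\tau(G-S)\geq k$, and by Theorem~\ref{NWT} it is enough to arrange that $G-S$ is $2k$-edge-connected.

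Next I would convert the hypothesis $\kappa(L(G)) \geq (k+1)^2-2$ into information about $G$. Deleting the ${\rm deg}_G(u)+{\rm deg}_G(v)-2$ neighbours of a vertex $uv$ of $L(G)$ isolates it, so $\kappa(L(G)) \leq {\rm deg}_G(u)+{\rm deg}_G(v)-2$ for every edge $uv$; and deleting (the $L(G)$-vertices of) an edge-cut of $G$ whose two sides each span an edge disconnects $L(G)$, so $\kappa(L(G)) \leq \lambda'(G)$, the restricted edge-connectivity. Thus ${\rm deg}_G(u)+{\rm deg}_G(v) \geq (k+1)^2$ for every edge $uv$, and $\lambda'(G) \geq (k+1)^2-2 \geq 2k$. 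These already settle the easy cases with $S=\emptyset$: if $G$ is not super edge-connected then a minimum edge-cut is a restricted edge-cut, so $\lambda(G)=\lambda'(G)\geq 2k$; if $G$ is super edge-connected then $\lambda(G)=\delta(G)$, so $\delta(G)\geq 2k$ again gives $\lambda(G)\geq 2k$; either way $\tau(G)\geq k$. The substantive case is therefore $G$ super edge-connected with $k+1 \leq \delta(G) \leq 2k-1$, where $\lambda(G)=\delta(G) < 2k$ and a nonempty $S$ is forced.

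In that case let $S_0=\{v\in V(G) : {\rm deg}_G(v)\leq 2k-1\}$. The edge-degree bound makes $S_0$ independent, since two adjacent vertices of $S_0$ would have degree-sum at most $2(2k-1) < (k+1)^2$; in particular $S_0$ is a star-subset, and every vertex adjacent to $S_0$ has degree at least $(k+1)^2-(2k-1)=k^2+2$. I would then enlarge $S_0$ to the desired $S$ by absorbing each ``deficient'' high-degree vertex --- one that would keep fewer than $2k$ neighbours after $S_0$ is removed --- as the \emph{centre} of a star whose leaves are its low-degree neighbours, so that $G-S$ has minimum degree at least $2k$. Finally I would promote this to $2k$-edge-connectivity of $G-S$ using $\lambda'(G)\geq 2k$: any edge-cut of $G-S$ with both sides spanning an edge extends to a restricted edge-cut of $G$, hence has at least $2k$ edges.

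The crux --- and the step I expect to fight hardest --- is the construction of $S$ as a genuine star-subset while keeping $G-S$ $2k$-edge-connected. The plain choice $S=S_0$ does not work: the bound ${\rm deg}_G(u)+{\rm deg}_G(v)\geq(k+1)^2$ permits a high-degree vertex to have almost all of its neighbours inside $S_0$, so such vertices must be drawn into $S$ as star centres. The obstruction is that $\langle S\rangle_G$ must decompose into stars, whereas a low-degree vertex may be adjacent to several candidate centres and two candidate centres may be adjacent --- both of which break the star structure. I would resolve this by a careful assignment of low-degree vertices to centres (greedily, or via a matching/Hall-type argument), exploiting that each low-degree vertex has at most $2k-1$ neighbours while each centre has degree at least $k^2+2$; it is exactly this margin, guaranteed by the connectivity $(k+1)^2-2$, that leaves enough room, and verifying that it always suffices is the technical heart of the proof.
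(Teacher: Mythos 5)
Your reduction to $\zeta(S)\ge k$ and your handling of the two easy cases ($G$ not super edge-connected, or $\delta(G)\ge 2k$) are sound and essentially reproduce the paper's Theorem~\ref{ext}. The genuine gap is in the remaining case ($G$ super edge-connected, $k+1\le\delta(G)\le 2k-1$), at the step you yourself flag least: the promotion to $2k$-edge-connectivity of $G-S$. The inequality there runs the wrong way. If $F$ is an edge-cut of $G-S$ separating $A$ from $B$, with both sides spanning an edge, then the corresponding cut of $G$ separating $A$ from $V(G)\setminus A$ has size $|F|+e_G(A,S)$, where $e_G(A,S)$ denotes the number of edges of $G$ between $A$ and $S$. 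The hypothesis $\lambda_{2,2}(G)\ge k^2+2k-1$ bounds this \emph{larger} quantity from below, yielding only $|F|\ge k^2+2k-1-e_G(A,S)$, which is vacuous because $e_G(A,S)$ can be arbitrarily large. Deleting $S$ may create small restricted edge-cuts that simply do not exist in $G$, and nothing in your argument excludes this. Since your whole plan hinges on getting $\tau(G-S)\ge k$ from Theorem~\ref{NWT}, i.e.\ on $\lambda(G-S)\ge 2k$, the proof collapses exactly where the real work has to happen. (The construction of $S$ is also only a sketch: you defer the adjacent-centre and shared-leaf conflicts to an unspecified greedy/Hall argument, and you overlook that absorbing a deficient centre into $S$ lowers the surviving degree of its remaining neighbours, so deficiency cascades and the process need not terminate in a star-subset at all.)

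For comparison, the paper never needs a nonempty $S$ in this case, because it does not restrict itself to Nash-Williams/Tutte as the packing tool. It observes that $G$ is $\delta(G)$-edge-connected (super edge-connectedness gives $\lambda(G)=\delta(G)$) and essentially $(k^2+2k-1)$-edge-connected (by Theorem~\ref{con-L}, $\lambda_{2,2}(G)=\kappa(L(G))$), and that $x\mapsto x^2/(x-k)$ is decreasing on the interval $(k,2k)$, so that $k^2+2k-1=(k+1)^2-2\ge\left\lceil \delta(G)^2/(\delta(G)-k)\right\rceil-2$. The Lai--Li packing theorem (Theorem~\ref{LL}) then gives $\tau(G)\ge k$ even though $\lambda(G)<2k$, and Corollary~\ref{line-com} transfers these $k$ edge-disjoint spanning trees of $G$ to $k$ completely independent spanning trees of $L(G)$; this is exactly Theorem~\ref{main-2}. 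The role of the threshold $k^2+2k-1$ is precisely to meet the Lai--Li hypothesis $h\ge g^2/(g-k)-2$, not to leave room for carving out a star-subset whose deletion stays $2k$-edge-connected. To keep your route, you would in effect have to reprove a statement of Lai--Li strength from scratch, which is the main theorem of a separate paper.
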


In particular, we have the following corollary by setting $k = 2$ in Theorem \ref{main-1}.

\begin{corollary} \label{cor-main-01}
Every 4-connected line graph $L(G)$ with $\delta(G) \geq 4$
has two completely independent spanning trees.
\end{corollary}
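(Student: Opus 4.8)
The plan is to obtain this corollary as the special case $k=2$ of Theorem \ref{main-1}, so the entire argument is a check that the hypotheses line up. Setting $k=2$ gives $2k=4$, so ``$L(G)$ is $4$-connected'' is exactly ``$L(G)$ is $2k$-connected.'' Theorem \ref{main-1} requires, in addition, that $G$ be not super edge-connected \emph{or} that $\delta(G) \geq 2k$. Since the corollary assumes $\delta(G) \geq 4 = 2k$, the second disjunct holds outright, independently of whether $G$ is super edge-connected. Hence Theorem \ref{main-1} applies verbatim with $k=2$ and produces two completely independent spanning trees in $L(G)$, which is what is claimed.

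Because $\delta(G) \geq 4$ already forces the degree disjunct, no case analysis on super edge-connectivity is needed, and the derivation is immediate; this is presumably why the corollary is phrased with the clean hypothesis $\delta(G) \geq 4$ rather than the weaker-looking disjunction. All of the substantive work therefore lives one level up, in the proof of Theorem \ref{main-1}, which in turn rests on the lower bound $\tau^\ast(L(G)) \geq \tau'(G)$ supplied by Theorem \ref{main-0}. If instead one wanted a self-contained proof of the corollary, the only real task would be to certify $\tau'(G) \geq 2$ under these hypotheses: one would exhibit a star-subset $S \in \mathcal{F}(V(G))$ with both $\tau(G-S) \geq 2$ and $\zeta(S) \geq 2$, and then invoke Theorem \ref{main-0} to convert this into two completely independent spanning trees of $L(G)$.

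I therefore do not expect any genuine obstacle at the level of the corollary itself; it is obstacle-free once Theorem \ref{main-1} is in hand. Whatever difficulty exists is entirely inherited from Theorem \ref{main-1}, namely translating the $4$-connectivity of $L(G)$ together with $\delta(G)\geq 4$ into the existence of a suitable star-subset $S$ — and that translation is precisely the content already carried out in the proof of that theorem. Accordingly, I would write the proof of the corollary as a single sentence invoking Theorem \ref{main-1} with $k=2$.
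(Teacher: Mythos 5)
Your proposal is correct and matches the paper exactly: the paper obtains this corollary precisely by setting $k=2$ in Theorem \ref{main-1}, with the hypothesis $\delta(G) \geq 4 = 2k$ discharging the disjunction in that theorem's condition. No further comment is needed.
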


It has been proved in \cite{FHL} that for any cubic graph $G$ of order at least 9,
$L(G)$ has no two completely independent spanning trees. 
There indeed exists a cubic graph $G$ of order at least 9 such that $L(G)$ is 4-connected,
e.g., the Petersen graph; in fact, it has also been pointed out in \cite{FHL} that
there are infinitely many such cubic graphs. 
Thus, there exists a 4-connected line graph $L(G)$ with $\delta(G) = 3$ which 
has no two completely independent spanning trees. 
In this sense, the lower bound of $4$ on $\delta(G)$ in Corollary \ref{cor-main-01}
is best possible. 
On the other hand, by increasing the connectivity of $L(G)$,
the lower bound of $4$ on $\delta(G)$ can be weakened to $3$.
Namely, from Theorem \ref{main-3}, the following corollary is obtained.

\begin{corollary} \label{cor-main-02}
Every 7-connected line graph $L(G)$ with $\delta(G) \geq 3$
has two completely independent spanning trees.
\end{corollary}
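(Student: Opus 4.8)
The plan is to obtain this corollary as the special case $k = 2$ of Theorem \ref{main-3}. First I would check that the two hypotheses of that theorem reduce to the stated ones under this substitution. The connectivity threshold $k^2 + 2k - 1$ becomes $2^2 + 2 \cdot 2 - 1 = 7$, so the assumption that $L(G)$ be $(k^2+2k-1)$-connected becomes the assumption that $L(G)$ be $7$-connected. The minimum-degree requirement $\delta(G) \geq k+1$ becomes $\delta(G) \geq 3$. Since the chosen value $k = 2$ satisfies the standing hypothesis $k \geq 2$ of Theorem \ref{main-3}, the theorem is applicable without modification.

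With these substitutions verified, Theorem \ref{main-3} guarantees $k = 2$ completely independent spanning trees in every $7$-connected line graph $L(G)$ with $\delta(G) \geq 3$, which is exactly the conclusion of the corollary. Hence no further argument is needed beyond recording the substitution and the elementary arithmetic of the threshold expression.

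Because the corollary is a direct specialization of the general bound, there is essentially no obstacle to surmount here; the entire burden of proof lies in establishing Theorem \ref{main-3} itself, and the only step to confirm is that the numerical thresholds $k^2+2k-1 = 7$ and $k+1 = 3$ agree with the quantities appearing in the statement. This comparison is immediate, so the proof reduces to a single invocation of Theorem \ref{main-3} at $k = 2$.
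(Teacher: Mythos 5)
Your proof is correct and matches the paper's own derivation exactly: the paper also obtains Corollary \ref{cor-main-02} by specializing Theorem \ref{main-3} to $k=2$, where $k^2+2k-1=7$ and $k+1=3$. Nothing further is needed.
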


Suppose that $L(G)$ is $(4k-2)$-connected and $G$ is $r$-regular, i.e., every vertex in $G$ 
has degree $r$.
Since $L(G)$ is $(2r-2)$-regular, it holds that $\delta(L(G)) = 2r-2 \geq \kappa(L(G)) \geq 4k-2$.
Thus, $r \geq 2k$.
Therefore, from Theorem \ref{main-1}, 
we have the following result on $L(G)$ for a regular graph $G$ without any other restriction on $G$.

\begin{corollary} \label{main-cor-1}
For any $k \geq 2$, 
every $(4k-2)$-connected line graph $L(G)$ has $k$ completely independent spanning trees 
if $G$ is regular. 
\end{corollary}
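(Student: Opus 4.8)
The plan is to reduce the statement directly to Theorem \ref{main-1} by observing that a high connectivity of $L(G)$ forces the regular graph $G$ to have large minimum degree. First I would record the elementary degree count in the line graph: if $G$ is $r$-regular, then each vertex of $L(G)$ corresponds to an edge $e = uv$ of $G$, and $e$ is adjacent in $L(G)$ to exactly the $r-1$ other edges incident with $u$ together with the $r-1$ other edges incident with $v$. Hence every vertex of $L(G)$ has degree $2r-2$, so $L(G)$ is $(2r-2)$-regular and $\delta(L(G)) = 2r-2$.

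Next I would invoke the universal inequality $\kappa(H) \le \lambda(H) \le \delta(H)$, applied to $H = L(G)$. Since $L(G)$ is $(4k-2)$-connected by hypothesis, this gives $4k-2 \le \kappa(L(G)) \le \delta(L(G)) = 2r-2$, whence $r \ge 2k$. Because $G$ is $r$-regular, this is exactly the statement $\delta(G) = r \ge 2k$, so the second alternative in the hypothesis of Theorem \ref{main-1} is satisfied automatically, with no assumption about super edge-connectivity required.

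Finally I would verify the connectivity hypothesis of Theorem \ref{main-1}: that theorem requires $L(G)$ to be $2k$-connected, and indeed $4k-2 \ge 2k$ for every $k \ge 1$, so a $(4k-2)$-connected graph is in particular $2k$-connected. Applying Theorem \ref{main-1} under the established condition $\delta(G) \ge 2k$ then yields $k$ completely independent spanning trees in $L(G)$, completing the argument.

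I do not expect any genuine obstacle here: all of the substantive work lies in Theorem \ref{main-1}, and the corollary merely repackages the regular case so that the required degree bound on $G$ is obtained for free from the connectivity of $L(G)$. The only points demanding care are the degree computation in $L(G)$ and the verification that both numerical hypotheses of Theorem \ref{main-1}, namely $2k$-connectivity and $\delta(G) \ge 2k$, follow from $(4k-2)$-connectivity together with regularity.
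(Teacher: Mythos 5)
Your proposal is correct and follows exactly the paper's own argument: the paper likewise notes that $L(G)$ is $(2r-2)$-regular, deduces $2r-2 = \delta(L(G)) \geq \kappa(L(G)) \geq 4k-2$ so that $r \geq 2k$, and then applies Theorem \ref{main-1} via its hypothesis $\delta(G) \geq 2k$ (the $2k$-connectivity of $L(G)$ being immediate from $4k-2 \geq 2k$). There is no substantive difference between your route and the paper's.
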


The maximum number of completely independent spanning trees
in $L(K_n)$ is $\lfloor \frac{n+1}{2} \rfloor$ as shown in \cite{Wang-} and 
the connectivity of $L(K_n)$ is $2n-4$ for $n \geq 4$ 
(these facts can be briefly confirmed in Sections 3 and 5).
Thus, $L(K_{2k+1})$ is $(4k-2)$-connected but has no $k+2$ completely independent spanning trees
for $k \geq 2$. 
Hence, the lower bound of $k$ on $\tau^\ast(L(G))$ in Corollary \ref{main-cor-1} 
cannot be improved to $k+2$ in general. 
In this sense, we may say that the lower bound of $k$ is nearly optimal.

This paper is organized as follows. 
Section 2 presents a new characterization of a graph $G$ 
with $k$ completely independent spanning trees
which can be applied to faulty networks and also from which a general upper bound on $\tau^\ast(G)$
is obtained.
Proofs of Theorems \ref{main-0} and \ref{main-01} are given in Sections 3 and 4, respectively. 
Section 5 presents statements stronger than Theorem \ref{main-1} and a statement with  
a condition complementary to that of Theorem \ref{main-1}, from which Theorem \ref{main-3}
is obtained.

\section{Characterizations}

A  dominating set $S$ of a graph $G$ is a subset $S$ of $V(G)$ such that every vertex  
in $V(G) \setminus S$ has a neighbor in $S$.
A connected dominating set $S$ of $G$ is a dominating set of $G$ such that 
$\langle S \rangle_G$ is connected.
For a connected graph $G$, 
let $\gamma_{\rm c}(G)$ denote the minimum cardinality of a connected dominating set of $G$.
For $S_1,S_2 \subset V(G)$ such that $S_1 \neq \emptyset$, $S_2 \neq \emptyset$, and
$S_1 \cap S_2 = \emptyset$,
we denote by $\langle S_1,S_2 \rangle_G$ 
the bipartite subgraph of $G$ induced by partite sets $S_1$ and $S_2$,
i.e., $\langle S_1,S_2 \rangle_G = \langle S_1 \cup S_2 \rangle_G - 
(E(\langle S_1 \rangle_G) \cup E(\langle S_2 \rangle_G))$.
A unicyclic graph is a connected graph with exactly one cycle. 
For a nonempty subset $E'$ of $E(G)$, the edge-induced subgraph of $G$ by $E'$ is denoted by 
$\langle E' \rangle_{G}$.
An internal vertex in $G$ is a vertex with degree at least 2 in $G$,
while a leaf in $G$ is a vertex with degree 1 in $G$.
The set of internal vertices (respectively, leaves) in $G$ is denoted by $V_I(G)$
(respectively, $V_L(G)$). 
For a positive integer $k$, let $\mathbb{N}_k = \{1,2,\ldots,k\}$. 

Completely independent spanning trees can be characterized as follows.

\begin{theorem} {\rm \cite{H01}} \label{char-1}
Spanning trees $T_1,T_2,\ldots,T_k$ in a graph $G$ are completely independent if and only if 
$E(T_i) \cap E(T_j) = \emptyset$ and $V_I(T_i) \cap V_I(T_j) = \emptyset$ for any $i \neq j$.
\end{theorem}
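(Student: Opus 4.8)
The plan is to prove the two implications separately, handling the edge condition and the internal-vertex condition independently of one another. For distinct vertices $u,v$ and an index $i$, I will write $P_i^{u,v}$ for the unique $u$--$v$ path in the spanning tree $T_i$; by definition, $T_1,\ldots,T_k$ are completely independent exactly when, for every pair $u\neq v$, the paths $P_1^{u,v},\ldots,P_k^{u,v}$ are pairwise edge-disjoint and pairwise share no vertex other than $u$ and $v$.

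For the ($\Leftarrow$) direction I would assume $E(T_i)\cap E(T_j)=\emptyset$ and $V_I(T_i)\cap V_I(T_j)=\emptyset$ for all $i\neq j$, and fix $u\neq v$. Since $E(P_i^{u,v})\subseteq E(T_i)$, edge-disjointness of the trees at once yields edge-disjointness of the paths. For the vertex condition, suppose some $w\notin\{u,v\}$ lay on both $P_i^{u,v}$ and $P_j^{u,v}$; as an interior vertex of a path it has degree two there, hence degree at least two in each of $T_i$ and $T_j$, so $w\in V_I(T_i)\cap V_I(T_j)$, contradicting the hypothesis. This is the easy half.

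The ($\Rightarrow$) direction carries the real content. Assuming the trees are completely independent, the edge claim is immediate: if $e=xy\in E(T_i)\cap E(T_j)$ with $i\neq j$, then $P_i^{x,y}$ and $P_j^{x,y}$ both coincide with the single edge $e$, so they share $e$, contradicting edge-disjointness. The main obstacle is the internal-vertex statement: given $w\in V_I(T_i)\cap V_I(T_j)$, I must manufacture a pair $u,v$ that $w$ separates in \emph{both} trees at once. Since $w\in V_I(T_i)$, the forest $T_i-w$ has at least two components, giving a partition $\mathcal{A}$ of $V(G)\setminus\{w\}$ into $p\geq 2$ classes (their vertex sets); likewise $T_j-w$ gives a partition $\mathcal{B}$ into $q\geq 2$ classes. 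Writing $A(x),B(x)$ for the classes containing $x$, it suffices to find $u,v$ with $A(u)\neq A(v)$ and $B(u)\neq B(v)$, since then $w$ lies internally on each of $P_i^{u,v}$ and $P_j^{u,v}$, contradicting internal-vertex-disjointness.

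The existence of such a pair is the crux, and I expect to settle it by a short pigeonhole argument. Suppose for contradiction that any two vertices agree in $\mathcal{A}$ or in $\mathcal{B}$. Choose $u,v$ with $A(u)\neq A(v)$, possible since $p\geq 2$; then $B(u)=B(v)$. As $q\geq 2$, pick $z$ with $B(z)\neq B(u)=B(v)$; the supposition then forces $A(z)=A(u)$ and $A(z)=A(v)$, so $A(u)=A(v)$, a contradiction. Hence the required pair exists, establishing $V_I(T_i)\cap V_I(T_j)=\emptyset$ and completing the equivalence.
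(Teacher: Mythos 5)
Your proof is correct. Note that the paper itself gives no proof of this statement --- it is quoted from reference \cite{H01} --- so there is no in-paper argument to compare against; your proposal stands as a self-contained proof. Both directions are handled soundly: the ($\Leftarrow$) half and the edge half of ($\Rightarrow$) are the routine observations, and the genuine content is exactly where you located it, namely producing a pair $u,v$ separated by the common internal vertex $w$ in \emph{both} trees. Your reduction to the combinatorial fact that any two partitions of the set $V(G)\setminus\{w\}$, each with at least two (nonempty) classes, admit a pair of elements lying in different classes of both partitions is valid, and your three-element pigeonhole argument proves that fact correctly: if $A(u)\neq A(v)$ forced $B(u)=B(v)$, then any $z$ with $B(z)\neq B(u)$ would have to satisfy $A(z)=A(u)$ and $A(z)=A(v)$ simultaneously, a contradiction. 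The only hypotheses this uses --- that $T_i-w$ and $T_j-w$ each have at least two components, all nonempty, and that both partitions cover the same ground set --- are immediate from $w\in V_I(T_i)\cap V_I(T_j)$ and the fact that the $T_i$ are spanning, which you implicitly rely on and could flag explicitly.
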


Based on Theorem \ref{char-1}, 
a graph which has $k$ completely independent spanning trees can be characterized as follows.

\begin{theorem} \label{char-3}
A graph $G$ has $k$ completely independent spanning trees if and only if
there are $k$ disjoint connected dominating sets $V_1,V_2,\ldots,V_k$ of $G$ 
such that every component of $\langle V_i, V_j \rangle_G$ has a cycle for any $i \neq j$. 
\end{theorem}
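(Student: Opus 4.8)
The plan is to establish both implications through the characterization in Theorem~\ref{char-1}, using the internal vertices of each tree as the corresponding connected dominating set. For the forward direction, given completely independent spanning trees $T_1,\ldots,T_k$, I would set $V_i = V_I(T_i)$. Since the internal vertices of a tree induce a connected subtree (the $T_i$-path between two internal vertices passes only through internal vertices), $\langle V_i \rangle_G$ is connected; and since every leaf of $T_i$ is joined in $T_i$ to an internal vertex (assuming $|V(G)|\geq 3$), the set $V_i$ dominates $G$. The sets $V_1,\ldots,V_k$ are pairwise disjoint by Theorem~\ref{char-1}.

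The crux of the forward direction is the cycle condition, which I would obtain by an edge count. Fix $i \neq j$ and a component $C$ of $\langle V_i,V_j\rangle_G$, and write $A = V(C)\cap V_i$ and $B = V(C)\cap V_j$. Because $V_i \cap V_j = \emptyset$, every vertex of $B$ is a leaf of $T_i$ and every vertex of $A$ is a leaf of $T_j$; hence each $b\in B$ contributes its unique $T_i$-parent edge and each $a\in A$ contributes its unique $T_j$-parent edge, all lying inside $C$. These $|A|+|B|$ edges are pairwise distinct: the $T_i$-edges are distinct from one another (distinct leaf endpoints), likewise the $T_j$-edges, and no $T_i$-edge equals a $T_j$-edge because $E(T_i)\cap E(T_j)=\emptyset$. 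Thus $C$ is a connected graph on $|A|+|B|$ vertices with at least $|A|+|B|$ edges, so it contains a cycle.

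For the converse, given disjoint connected dominating sets $V_1,\ldots,V_k$ with the cycle property, I would build each $T_i$ by taking a spanning tree of the connected graph $\langle V_i \rangle_G$ and attaching every remaining vertex of $G$ as a leaf via a single edge to $V_i$, which is possible since $V_i$ dominates $G$. Then $V_I(T_i)\subseteq V_i$, so the internal-vertex sets are automatically disjoint, and by Theorem~\ref{char-1} it remains only to arrange $E(T_i)\cap E(T_j)=\emptyset$. A short case analysis shows the only possible coincidences occur inside the bipartite graphs $\langle V_i,V_j\rangle_G$, namely between the edges attaching $V_j$-vertices to $V_i$ in $T_i$ and the edges attaching $V_i$-vertices to $V_j$ in $T_j$.

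The main obstacle, and the step where the cycle hypothesis is used, is selecting these attachment edges conflict-free. I would recast it as: in each component of $\langle V_i,V_j\rangle_G$, choose for every vertex one incident edge so that no edge is chosen by both its endpoints. Since the component contains a cycle it has at least as many edges as vertices, hence admits an orientation in which every vertex has out-degree at least one (orient a spanning connected unicyclic subgraph so that all out-degrees equal one, then orient the remaining edges arbitrarily); letting each vertex choose any out-edge, no edge is chosen by both endpoints because each oriented edge has a single tail. Assigning a $V_j$-vertex's chosen edge as its $T_i$-attachment and a $V_i$-vertex's chosen edge as its $T_j$-attachment then yields the required edge-disjointness, with distinct pairs $(i,j)$ and the vertices lying outside all $V_i$ handled independently, since their attachment edges occupy disjoint parts of $G$.
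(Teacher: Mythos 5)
Your proposal is correct and follows essentially the same route as the paper: the forward direction takes $V_i = V_I(T_i)$ and the converse builds each tree from a spanning tree of $\langle V_i \rangle_G$ plus attachment edges chosen via an out-degree-one orientation of a unicyclic spanning subgraph of each component of $\langle V_i, V_j \rangle_G$. The only difference is local and cosmetic: where you establish the cycle condition by counting the distinct parent edges (giving $|E(C)| \geq |V(C)|$), the paper follows the parent sequence $(v, n(v), n(n(v)), \ldots)$ and argues it eventually becomes periodic but not alternating --- both arguments exploit exactly the same injection of parent edges into the component.
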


\begin{proof}
Suppose that there are $k$ completely independent spanning trees $T_1,T_2,\ldots,T_k$ in $G$. 
Since each $T_i$ is a spanning tree of $G$, each $V_I(T_i)$ is a connected dominating set of $G$.
From Theorem \ref{char-1}, $V_I(T_i) \cap V_I(T_j) = \emptyset$ for any $i \neq j$.
Thus, $V_I(T_1), V_I(T_2), \ldots, V_I(T_k)$ are $k$ disjoint connected dominating sets of $G$.
Let $i, j \in \mathbb{N}_k$ such that $i \neq j$.
Each vertex in $V_I(T_i)$ (respectively, $V_I(T_j)$) is a leaf of $T_j$ (respectively, $T_i$). 
For each $v \in V_I(T_i)$ (respectively, $v \in V_I(T_j)$), 
we denote by $n(v)$ the neighbor of $v$ in $T_j$ (respectively, $T_i$). 
For any $v \in V_I(T_i) \cup V_I(T_j)$, the infinite sequence $(v, n(v), n(n(v)),n(n(n(v))),\ldots)$
of vertices in $\langle V_I(T_i), V_I(T_j) \rangle_{G}$ 
finally becomes periodic but not alternating,
since $E(T_i) \cap E(T_j) = \emptyset$ by Theorem \ref{char-1}. 
This implies that every component of $\langle V_I(T_i), V_I(T_j) \rangle_{G}$ has a cycle.

Suppose that there are $k$ disjoint connected dominating sets $V_1,V_2,\ldots,V_k$ of $G$
such that every component in $\langle V_i, V_j \rangle_G$ has a cycle for any $i \neq j$. 
Every component in $\langle V_i, V_j \rangle_G$ has a spanning unicyclic subgraph
for any $i \neq j$.
Given a unicyclic graph $U$, we may replace the cycle with the directed cycle by orienting
each edge in the cycle and then orient each edge not in the cycle so that every vertex not in 
the cycle can reach a vertex in the cycle through a directed path. 
Thus, any unicyclic graph can be oriented so that every vertex has outdegree one.
Therefore, based on such an orientation, for any $i,j \in \mathbb{N}_k$ 
such that $i \neq j$ and for any $u \in V_i$ (respectively, $v \in V_j$),
we can select one vertex denoted $n'_j(u) \in V_j$ (respectively, $n'_i(v) \in V_i$) so that
$\{ un'_j(u)\ |\ u \in V_i \} \cap \{ vn'_i(v)\ |\ v \in V_j \} = \emptyset$.
Let $R = V(G) \setminus (\cup_{1 \leq i \leq k}V_i)$.
Since each $V_i$ is a dominating set, for any vertex $w$ in $R$,
we can select one vertex $n'_i(w) \in V_i$ adjacent to $w$.
Moreover, since each $\langle V_i \rangle_G$ is connected, it has a spanning tree $H_i$.
Now let $$T'_i = \left\langle E(H_i) \cup 
(\cup_{v \in V(G) \setminus (V_i \cup R)}\{vn'_i(v)\}) \cup 
(\cup_{w \in R}\{wn'_i(w)\}) \right\rangle_G$$ for each $1 \leq i \leq k$.
Then, $T'_1,T'_2,\ldots,T'_k$ are edge-disjoint spanning trees of $G$
such that $V_I(T'_i) \subseteq V_i$ for each $i$ and $R \subseteq V_L(T'_i)$ for all $i$. 
Hence, from Theorem \ref{char-1}, 
$T'_1,T'_2,\ldots,T'_k$ are $k$ completely independent spanning trees in $G$.
$\blacksquare$
\end{proof}

\bigskip

Theorem \ref{char-3} is useful since we can skip exact constructions when proving the existence
of completely independent spanning trees in a graph; however, we may need to use 
Theorem \ref{char-1} when carefully modifying or augmenting completely independent spanning trees. 

A graph $G$ with $k$ completely independent spanning trees can also be characterized 
based on a partition of $V(G)$ \cite{Ar}; 
namely, $G$ has $k$ completely independent spanning trees if and only if 
$G$ has a CIST-partition, i.e., $V(G)$ is partitioned into $V_1,V_2,\ldots,V_k$ such that
$\langle V_i \rangle$ is connected for each $i$ and
$\langle V_i, V_j \rangle_G$ has no tree-component for any $i \neq j$. 
Although a CIST-partition is simpler than the condition in Theorem \ref{char-3},
Theorem \ref{char-3} has an advantage from a fault-tolerant
point of view in communication networks, because any vertices in 
$V(G) \setminus \cup_{1 \leq i \leq k}V_i$ can be deleted while preserving
the existence of $k$ completely independent spanning trees, i.e., 
the following proposition follows from Theorem \ref{char-3}.

\begin{proposition} \label{CIST-fc}
If there are $k$ disjoint connected dominating sets $V_1,V_2,\ldots,V_k$ of a graph $G$ 
such that every component of $\langle V_i, V_j \rangle_G$ has a cycle for any $i \neq j$,
then for any $S \subseteq V(G) \setminus \cup_{1 \leq i \leq k}V_i$, 
there are $k$ completely independent spanning trees in $G-S$.
\end{proposition}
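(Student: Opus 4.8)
The plan is to apply Theorem~\ref{char-3} to the graph $G-S$ directly, keeping the very same sets $V_1, V_2, \ldots, V_k$ as the witnessing connected dominating sets. Since by hypothesis $S \subseteq V(G) \setminus \bigcup_{1 \leq i \leq k} V_i$, every $V_i$ is still a subset of $V(G-S)$ and the sets remain pairwise disjoint. All that I would need to verify is that the three requirements of Theorem~\ref{char-3} --- connectedness of each $\langle V_i \rangle$, domination by each $V_i$, and the presence of a cycle in every component of each $\langle V_i, V_j \rangle$ --- are preserved when we pass from $G$ to $G-S$.

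First I would note that deleting $S$ does not touch the subgraphs that matter. For each $i$, since $S \cap V_i = \emptyset$, no vertex of $V_i$ and no edge with both endpoints in $V_i$ is removed, so $\langle V_i \rangle_{G-S} = \langle V_i \rangle_G$ and hence each $\langle V_i \rangle_{G-S}$ is connected. Likewise, for $i \neq j$, since $S$ meets neither $V_i$ nor $V_j$, we have $\langle V_i, V_j \rangle_{G-S} = \langle V_i, V_j \rangle_G$, so every component of $\langle V_i, V_j \rangle_{G-S}$ still contains a cycle.

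The only requirement that is not immediately invariant under vertex deletion, and thus the one small obstacle, is the domination property. Here I would argue as follows: take any $w \in V(G-S) \setminus V_i$; then $w \in V(G) \setminus V_i$, so since $V_i$ dominates in $G$ there is a neighbor $u \in V_i$ of $w$. Because $u \in V_i$ and $S \cap V_i = \emptyset$, the vertex $u$ is not in $S$, so $u$ and the edge $wu$ both survive in $G-S$; hence $w$ is dominated by $V_i$ in $G-S$ as well. With all three hypotheses verified, Theorem~\ref{char-3} applied to $G-S$ delivers the desired $k$ completely independent spanning trees.
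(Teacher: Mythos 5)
Your proof is correct and follows exactly the route the paper intends: the paper states Proposition~\ref{CIST-fc} as an immediate consequence of Theorem~\ref{char-3}, and your argument is precisely the routine verification (same sets $V_1,\ldots,V_k$, invariance of $\langle V_i \rangle$ and $\langle V_i,V_j \rangle$ under deletion of $S$, and preservation of domination) that the paper leaves implicit. Nothing is missing.
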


Since completely independent spanning trees are edge-disjoint,
the maximum number of completely independent spanning trees in $G$ is at most
$\min\left\{ \delta(G), \left\lfloor \frac{|E(G)|}{|V(G)|-1} \right\rfloor \right\}$.
From Theorem \ref{char-3}, another general upper bound of 
$\left\lfloor \frac{|V(G)|}{\gamma_{\rm c}(G)} \right\rfloor$ 
on $\tau^\ast(G)$ is obtained.

\begin{proposition} \label{upper} 
Every graph $G$ has 
no $\left\lfloor \frac{|V(G)|}{\gamma_{\rm c}(G)} \right\rfloor + 1$ completely independent spanning trees,
i.e., $\tau^\ast(G) \leq \left\lfloor \frac{|V(G)|}{\gamma_{\rm c}(G)} \right\rfloor$. 
\end{proposition}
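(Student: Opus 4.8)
The plan is to derive this upper bound as a direct consequence of the forward (``only if'') direction of Theorem \ref{char-3}, which reduces the problem from spanning trees to disjoint connected dominating sets. I would begin by setting $k = \tau^\ast(G)$ and invoking Theorem \ref{char-3}: since $G$ has $k$ completely independent spanning trees, there exist $k$ pairwise disjoint connected dominating sets $V_1, V_2, \ldots, V_k$ of $G$. (The cycle condition on $\langle V_i, V_j \rangle_G$ is not needed here; only the existence and disjointness of the connected dominating sets matter.) Note that $\tau^\ast(G) \geq 1$ forces $G$ to be connected, so $\gamma_{\rm c}(G)$ is well-defined; the case $\tau^\ast(G) = 0$ makes the inequality trivial.

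The key step is a counting argument. By the minimality definition of $\gamma_{\rm c}(G)$ as the smallest cardinality of a connected dominating set, each $V_i$ satisfies $|V_i| \geq \gamma_{\rm c}(G)$. Because the $V_i$ are pairwise disjoint subsets of $V(G)$, summing these lower bounds gives
$$k \cdot \gamma_{\rm c}(G) \leq \sum_{i=1}^{k}|V_i| = \left| \bigcup_{i=1}^{k}V_i \right| \leq |V(G)|.$$
Dividing by $\gamma_{\rm c}(G) > 0$ yields $k \leq |V(G)|/\gamma_{\rm c}(G)$, and since $k$ is an integer I would conclude $k \leq \lfloor |V(G)|/\gamma_{\rm c}(G) \rfloor$, as required.

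I do not expect a genuine obstacle here, as the statement is essentially a corollary of Theorem \ref{char-3} combined with elementary counting. The only point requiring a moment of care is confirming that the family $\{V_i\}$ supplied by Theorem \ref{char-3} is genuinely pairwise disjoint (so that the cardinalities add without overcounting) and that each member is itself a connected dominating set (so that the bound $|V_i| \geq \gamma_{\rm c}(G)$ applies to every index); both facts are furnished verbatim by the theorem's conclusion, so the argument is immediate.
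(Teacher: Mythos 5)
Your proof is correct and follows exactly the route the paper intends: the paper states Proposition~\ref{upper} as an immediate consequence of Theorem~\ref{char-3} (the forward direction supplying $\tau^\ast(G)$ pairwise disjoint connected dominating sets, each of cardinality at least $\gamma_{\rm c}(G)$), and your counting argument $k\cdot\gamma_{\rm c}(G)\leq |V(G)|$ is precisely the omitted calculation. The edge-case remarks ($\tau^\ast(G)=0$, connectedness ensuring $\gamma_{\rm c}(G)$ is defined) are a reasonable added precaution and do not change the approach.
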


The upper bound of 
$\left\lfloor \frac{|V(G)|}{\gamma_{\rm c}(G)} \right\rfloor$ will be used in the next section 
to evaluate the optimality of a lower bound on $\tau^\ast(G)$.

\section{Proof of Theorem \ref{main-0}}

Let $S$ be a star-subset of $V(G)$.
To show the first statement in Theorem \ref{main-0}, 
it is sufficient to show that 
$L(G)$ has $\min\{\tau(G-S), \zeta(S)\}$ completely independent spanning trees.

\bigskip

\noindent Case 1: $S = \emptyset$ or $E(\langle S \rangle_G) = \emptyset$.

Let $T_1,T_2,\ldots,T_{\tau(G-S)}$ be edge-disjoint spanning trees in $G-S$.
Since $\langle E(T_i) \rangle_{L(G)}$ is connected such that 
every edge in $E(G) \setminus (\cup_{1 \leq i \leq \tau(G-S)}E(T_i))$
is adjacent to at least one edge in $E(T_i)$ for each $i$, 
$E(T_1), E(T_2), \ldots, E(T_{\tau(G-S)})$ are disjoint connected dominating sets of $L(G)$.
For any $i \neq j$, every edge in $E(T_i)$ is adjacent to at least two edges in $E(T_j)$
since $V(T_i) = V(T_j)$.
Thus, it holds that $\delta(\langle E(T_i), E(T_j) \rangle_{L(G)}) \geq 2$. 
This means that every component of $\langle E(T_i), E(T_j) \rangle_{L(G)}$ 
has a cycle for any $i \neq j$. 
Therefore, from Theorem \ref{char-3}, it follows that 
$L(G)$ has $\tau(G-S) = \min\{\tau(G-S), \zeta(S)\}$ 
completely independent spanning trees. 
In particular, from the proof of Theorem \ref{char-3}, 
we can construct completely independent spanning trees $T'_1,T'_2,\ldots,T'_{\tau(G-S)}$ in $L(G)$ 
so that every element in $E(G) \setminus (\cup_{1 \leq i \leq \tau(G-S)}E(T_i))$
is a leaf of $T'_i$ for all $1 \leq i \le \tau(G-S)$. 

\bigskip

\noindent Case 2: $S \neq \emptyset$ and $E(\langle S \rangle_G) \neq \emptyset$.

By the argument for Case 1, 
there are $\tau(G-S)$ completely independent spanning trees
$T'_1,T'_2,\ldots,T'_{\tau(G-S)}$ in $L(G) - E(\langle S \rangle_G)$ 
such that every element in $E(G) \setminus 
((\cup_{1 \leq i \leq \tau(G-S)}E(T_i)) \cup E(\langle S \rangle_G))$
is a leaf of $T'_i$ for all $1 \leq i \leq \tau(G-S)$. 
For $w \in S$, let $F_S(w) = \{ ww' \in E(G)\ |\ w' \not\in S\}
= \{ww'_1,ww'_2,\ldots,ww'_{f_S(w)}\}$.
Since $S$ is a star-subset of $V(G)$,
for any edge $xy \in E(\langle S \rangle_G)$, it holds that
${\rm deg}_{\langle S \rangle_G}(x) = 1$ or ${\rm deg}_{\langle S \rangle_G}(y) = 1$.
For each $uv \in E(\langle S \rangle_G)$ where ${\rm deg}_{\langle S \rangle_G}(v) = 1$, 
we augment $T'_i$ by joining $uv$ to $uu'_i$ 
for $1 \leq i \leq \min\{\tau(G-S),f_S(u)\}$ and
to $vv'_{i-\min\{\tau(G-S),f_S(u)\}}$
for $\min\{\tau(G-S),f_S(u)\}+1 \leq i \leq \min\{\tau(G-S),f_S(u)+f_S(v)\}$.
Let $T''_1, T''_2, \ldots, T''_{\tau(G-S)}$ be the resultant trees.
Then, the $\min\{\tau(G-S), \min_{uv \in E(\langle S \rangle_G)}f_S(u)+f_S(v)\} 
= \min\{\tau(G-S), \zeta(S)\}$ trees 
$T''_1, T''_2, \ldots, T''_{\min\{\tau(G-S), \zeta(S)\}}$ are edge-disjoint spanning trees in $L(G)$.
Note that $T''_1, T''_2, \ldots, T''_{\tau(G-S)}$ are indeed edge-disjoint each other but 
$T''_{\min\{\tau(G-S), \zeta(S)\}+1}, \ldots, T''_{\tau(G-S)}$ are not spanning trees in $L(G)$.
Moreover, by our augmentation, it holds that
$V_I(T''_1), V_I(T''_2), \ldots, V_I(T''_{\min\{\tau(G-S), \zeta(S)\}})$ are disjoint each other.
Therefore, from Theorem \ref{char-1}, 
$T''_1, T''_2, \ldots, T''_{\min\{\tau(G-S), \zeta(S)\}}$
are completely independent spanning trees in $L(G)$.

\bigskip

It is known that $K_{n}$ has $\lfloor \frac{n}{2} \rfloor$ edge-disjoint spanning trees.
Since $(\lfloor \frac{n}{2} \rfloor+1)(n-1) > \frac{n(n-1)}{2} = |E(K_n)|$, $K_n$ has no 
$(\lfloor \frac{n}{2} \rfloor + 1)$ edge-disjoint spanning trees.
Thus, $\tau(K_n) = \lfloor \frac{n}{2} \rfloor$.
Let $k$ be a positive integer and $\ell$ a nonnegative integer such that $0 \leq \ell < k$.
Let $H_\ell$ be the graph obtained from $K_{4k}$ by adding two new vertices $u,v$ with 
the edge $uv$ and 
joining $u$ (respectively, $v$) to $\ell$ (respectively, $2k-\ell$) vertices in $K_{4k}$. 
Then, for $S = \{u,v\}$, 
it holds that $\min\{\tau(H_\ell-S),\zeta(S)\} = \min\{\tau(K_{4k}),f_S(u)+f_S(v)\} = 2k$.
Since $S$ is a star-subset of $V(H_\ell)$, $\tau'(H_\ell) \geq 2k$.
On the other hand, $\tau(H_\ell) \leq \delta(H_\ell) = \ell+1 \leq k$.
Hence, it holds that $\tau'(H_\ell) > \tau(H_\ell)$.
Moreover, since $\delta(L(H_\ell)) = 2k$, 
$L(H_\ell)$ has no $\tau'(H_\ell)+1$ completely independent spanning trees.
Therefore, $\tau^\ast(L(H_\ell)) = \tau'(H_\ell)$. 
Hence, the second statement in Theorem \ref{main-0} holds.

\bigskip

By Proposition \ref{upper}, we have 
$\tau^\ast(L(G)) \leq \left\lfloor \frac{|E(G)|}{\gamma_{\rm c}(L(G))} \right\rfloor$.
Let $S$ be a connected dominating set of $L(K_n)$ for $n \geq 3$.
If $|V(\langle S \rangle_{K_n})| \leq n-2$, then $|V(K_n) \setminus V(\langle S \rangle_{K_n})| \geq 2$ 
which implies that $S$ is not a dominating set of $L(K_n)$.
Thus, $|V(\langle S \rangle_{K_n})| \geq n-1$. 
Since $\langle S \rangle_{L(K_n)}$ is connected, $\langle S \rangle_{K_n}$ is also connected.
Therefore, $|S| \geq n-2$.
Thus, $\gamma_{\rm c}(L(K_n)) \geq n-2$.
In fact, the edge set of any subtree of order $n-1$ in $K_n$ is a connected dominating set
of $L(K_n)$.
Hence, it holds that $\gamma_{\rm c}(L(K_n)) = n-2$.
Therefore, we have $\tau^\ast(L(K_n)) \leq \frac{n(n-1)}{2(n-2)} = \frac{n+1}{2}+\frac{1}{n-2}$. 
By the previous arguments, it holds that 
$\tau^\ast(L(K_n)) \geq \tau'(K_n) \geq \tau(K_n) = \lfloor \frac{n}{2} \rfloor$.
Thus, $\lfloor \frac{n}{2} \rfloor \leq \tau^\ast(L(K_n)) \leq \frac{n+1}{2}+\frac{1}{n-2}$.
In particular, for even $n \geq 6$, it follows that $\tau^\ast(L(K_{n})) = \tau(K_{n}) = \frac{n}{2}$.
Hence, the third statement in Theorem \ref{main-0} holds.
$\blacksquare$

\bigskip

Since $L(K_4)$ is 4-regular, it has no three completely independent spanning trees.
Thus, it holds that $\tau^\ast(L(K_{4})) = \tau(K_{4}) = 2$. 
In the next section, we present constructions of $\lfloor \frac{n+1}{2} \rfloor$
disjoint connected dominating sets of $L(K_n)$ satisfying the condition in Theorem \ref{char-3}. 
Thus, it also holds that $\tau^\ast(L(K_{n})) = \frac{n+1}{2}$ for odd $n \geq 5$.
Therefore, $\tau^\ast(L(K_n)) = \lfloor \frac{n+1}{2} \rfloor$ for all $n \geq 4$.

From Theorem \ref{main-0}, we have the following corollaries.

\begin{corollary} 
Let $G$ be a graph and $S \subset V(G)$ such that $E(\langle S \rangle_G) = \emptyset$, 
i.e., $S$ is an independent set.
Then $L(G)$ has $\tau(G-S)$ completely independent spanning trees.
\end{corollary}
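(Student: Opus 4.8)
The plan is to derive this corollary as a direct special case of Theorem~\ref{main-0}. The key observation is that the quantity $\min\{\tau(G-S),\zeta(S)\}$ appearing inside the maximum defining $\tau'(G)$ simplifies dramatically when $S$ is independent. First I would verify that an independent set $S$ is automatically a star-subset of $V(G)$: since $E(\langle S \rangle_G) = \emptyset$, the induced subgraph $\langle S \rangle_G$ has no edges at all, so every component is a single isolated vertex, which is (trivially) a star. Hence $S \in {\mathcal F}(V(G))$ and $S$ is an admissible choice in the maximization defining $\tau'(G)$.

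Next I would compute $\zeta(S)$ for this choice. By the defining formula for $\zeta$, the value is $\infty$ precisely when $S = \emptyset$ or $E(\langle S \rangle_G) = \emptyset$; since our hypothesis is exactly $E(\langle S \rangle_G) = \emptyset$, we get $\zeta(S) = \infty$. Therefore $\min\{\tau(G-S),\zeta(S)\} = \tau(G-S)$, so this particular term equals $\tau(G-S)$.

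The conclusion then follows immediately from the first statement of Theorem~\ref{main-0}, which asserts $\tau^\ast(L(G)) \geq \tau'(G)$. Since
$$
\tau'(G) = \max_{S' \in {\mathcal F}(V(G))} \min\{\tau(G-S'),\zeta(S')\} \geq \min\{\tau(G-S),\zeta(S)\} = \tau(G-S),
$$
where the inequality uses that our specific independent set $S$ is one of the sets over which the maximum ranges, we obtain $\tau^\ast(L(G)) \geq \tau'(G) \geq \tau(G-S)$. Thus $L(G)$ has at least $\tau(G-S)$ completely independent spanning trees, which is the claim.

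This proof is essentially bookkeeping rather than genuine mathematics, so I do not anticipate a real obstacle; the only point requiring the slightest care is confirming that the edge-free induced subgraph genuinely qualifies as a star-subset under the paper's definition (a star being ``a tree which has at most one vertex with degree at least two,'' so that an isolated vertex, having no vertex of degree $\geq 2$, counts). Once that definitional check is in place, the result is an unwrapping of the two parts of the definition of $\tau'$ applied to the specific choice $S$, combined with the lower bound already established in Theorem~\ref{main-0}. There is no need to reconstruct the explicit trees, since the existence statement is inherited directly from the main theorem.
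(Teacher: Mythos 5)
Your proof is correct and follows exactly the route the paper intends: the corollary is stated as an immediate consequence of Theorem~\ref{main-0}, and your derivation (an independent set is a star-subset since isolated vertices are stars, $\zeta(S)=\infty$ by definition, hence $\tau'(G)\geq\min\{\tau(G-S),\zeta(S)\}=\tau(G-S)$, and then $\tau^\ast(L(G))\geq\tau'(G)$) is precisely the unwrapping the paper relies on, matching Case 1 of its proof of that theorem.
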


\begin{corollary} \label{line-com}
Every line graph $L(G)$ has $\tau(G)$ completely independent spanning trees.
Moreover, there exists a graph $G$ such that $L(G)$ has no $\tau(G)+1$
completely independent spanning trees. 
\end{corollary}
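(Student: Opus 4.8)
The first statement is immediate from the machinery already in place, so the plan is simply to chain two inequalities. I would invoke Theorem \ref{main-0}, which gives $\tau^\ast(L(G)) \geq \tau'(G)$ for every graph $G$, and combine it with the inequality $\tau'(G) \geq \tau(G)$ noted right after the definition of $\tau'$: since $\emptyset$ is a star-subset and $\min\{\tau(G-\emptyset),\zeta(\emptyset)\} = \tau(G)$, the maximum over star-subsets that defines $\tau'(G)$ is at least $\tau(G)$. Chaining yields $\tau^\ast(L(G)) \geq \tau'(G) \geq \tau(G)$, which is exactly the first claim.

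For the second statement, the plan is to exhibit an explicit witness rather than argue abstractly, and the complete graphs $K_n$ with even $n \geq 6$ are the natural candidates, since the relevant quantities have already been computed in the proof of Theorem \ref{main-0}. First I would recall $\tau(K_n) = \lfloor n/2 \rfloor$, so for even $n$ we have $\tau(K_n) = n/2$. Next I would bound $\tau^\ast(L(K_n))$ from above using Proposition \ref{upper}: a connected dominating set of $L(K_n)$ corresponds to a connected subgraph of $K_n$ spanning at least $n-1$ vertices, which forces at least $n-2$ edges, and the edge set of a spanning subtree of order $n-1$ realizes this, so $\gamma_{\rm c}(L(K_n)) = n-2$. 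Hence $\tau^\ast(L(K_n)) \leq \lfloor |E(K_n)|/(n-2) \rfloor = \lfloor \frac{n(n-1)}{2(n-2)} \rfloor = \lfloor \frac{n}{2} + \frac{1}{2} + \frac{1}{n-2} \rfloor$. For even $n \geq 6$ we have $\frac{1}{2} + \frac{1}{n-2} < 1$, so this floor collapses to $n/2$, pinning $\tau^\ast(L(K_n))$ between the lower bound $\tau(K_n) = n/2$ and the matching upper bound $n/2$.

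Combining, $\tau^\ast(L(K_n)) = n/2 = \tau(K_n)$ for even $n \geq 6$, so $L(K_n)$ has no $\tau(K_n)+1$ completely independent spanning trees, establishing the second claim. The main obstacle is the upper bound: everything hinges on the exact value $\gamma_{\rm c}(L(K_n)) = n-2$ and on the parity of $n$ forcing the floor down to $n/2$. Indeed, for odd $n$ the identical computation leaves room for $\tau^\ast(L(K_n))$ to strictly exceed $\tau(K_n)$ (consistent with $\tau^\ast(L(K_n)) = \lfloor \frac{n+1}{2} \rfloor$), so the restriction to even $n \geq 6$ is essential and must be justified by the collapse of the floor rather than taken for granted.
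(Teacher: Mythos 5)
Your proposal is correct and takes essentially the same route as the paper: the first claim is exactly the chain $\tau^\ast(L(G)) \geq \tau'(G) \geq \tau(G)$ from Theorem \ref{main-0}, and your witness for the second claim is the same one the paper uses in proving the third statement of Theorem \ref{main-0}. In particular, your computation for even $n \geq 6$ --- namely $\tau(K_n) = n/2$, $\gamma_{\rm c}(L(K_n)) = n-2$, and Proposition \ref{upper} forcing $\tau^\ast(L(K_n)) \leq \left\lfloor \frac{n(n-1)}{2(n-2)} \right\rfloor = \frac{n}{2}$ --- matches the paper's argument step for step.
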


In particular, Corollary \ref{line-com} will be used in Section 5 to prove Theorems \ref{main-1}
and \ref{main-3}.

\section{Proof of Theorem \ref{main-01}}


\begin{figure}[b]
\begin{center}
\epsfig{file=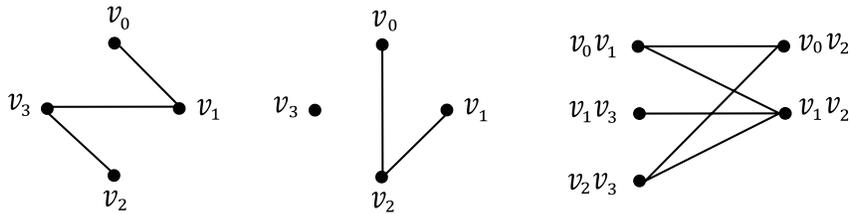,height=33mm}
\caption{Disjoint connected dominating sets $E_1$ and $E_2$ of $L(K_4)$ such that
$\langle E_1, E_2 \rangle_{L(K_4)}$ is a unicyclic graph.}
\end{center}
\end{figure}

Let $n \geq 4$ and $V(K_n) = \{v_0,v_1,\ldots,v_{n-1}\}$.
We first consider the case that $n \leq 6$ and 
define $E_i, 1 \leq i \leq \lfloor \frac{n+1}{2} \rfloor$ as follows:
\begin{itemize}
\item $E_1 = \{ v_0v_1, v_1v_3, v_2v_3 \}$, 
$E_2 = \{ v_0v_2, v_1v_2 \}$ for $n = 4$,
\item $E_1 = \{ v_0v_1, v_1v_4, v_2v_4 \}$, $E_2 = \{v_1v_2, v_0v_2, v_0v_3 \}$, 
$E_3 = \{ v_2v_3, v_3v_4, v_0v_4 \}$ for $n = 5$,
\item 
$E_1 = \{ v_0v_1, v_1v_5, v_2v_5, v_2v_4 \}$, $E_2 = \{v_1v_2, v_0v_2, v_0v_3, v_3v_5 \}$, 
$E_3 = \{ v_2v_3, v_1v_3, v_1v_4, v_0v_4 \}$ for $n = 6$.
\end{itemize}
Then, it can be checked that for each $4 \leq n \leq 6$, 
$E_i, 1 \leq i \leq \lfloor \frac{n+1}{2} \rfloor$ are disjoint connected dominating sets of $L(K_n)$
such that $\langle E_i, E_j \rangle_{L(K_n)}$ is connected and has a cycle for any $i \neq j$
(see Figures 1, 2, and 3).
Note that the edge $v_0v_3$, the edge $v_1v_3$, and the edges $v_3v_4, v_4v_5, v_0v_5$ of $K_n$ 
are not used in the connected dominating sets for $n = 4, 5$, and 6, respectively.
Thus, Theorem \ref{main-01} for $4 \leq n \leq 6$ follow 
from Proposition \ref{CIST-fc} and the symmetry of $K_n$. 
In what follows, we assume that $n \geq 7$.

\begin{figure}[t]
\begin{center}
\epsfig{file=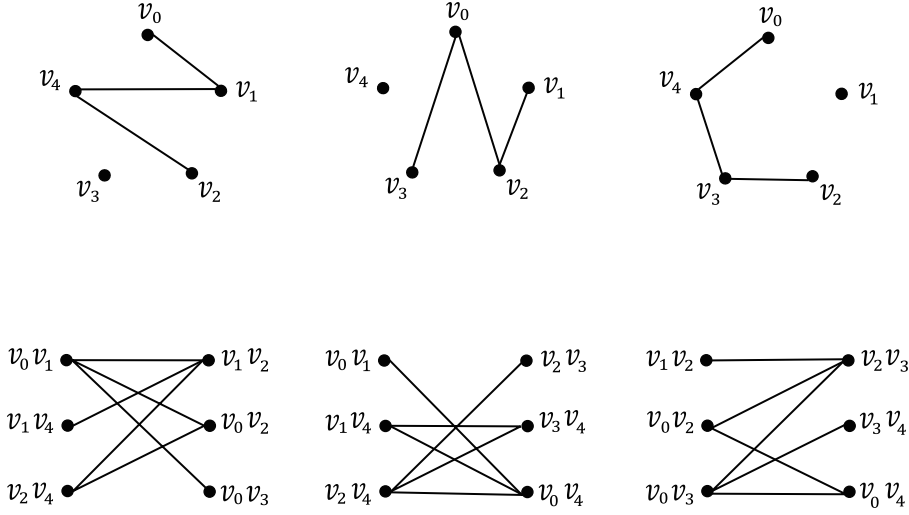,height=78mm}
\caption{Disjoint connected dominating sets $E_1,E_2$, and $E_3$ of $L(K_5)$
such that $\langle E_i, E_j \rangle_{L(K_5)}$ is a unicyclic graph for any $i \neq j$.} 
\end{center}
\end{figure}

\begin{figure}[t]
\begin{center}
\epsfig{file=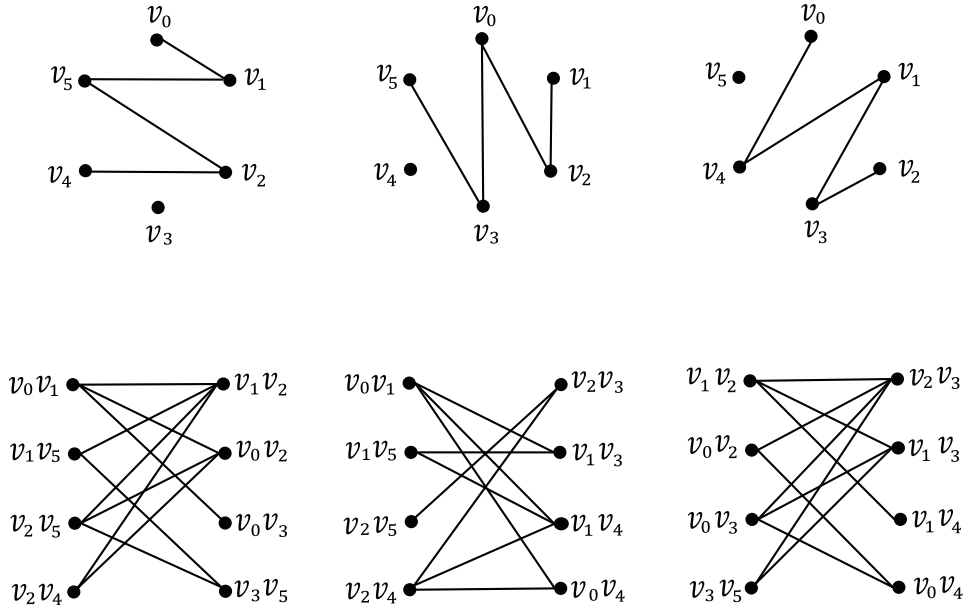,height=90mm}
\caption{Disjoint connected dominating sets $E_1,E_2$, and $E_3$ of $L(K_6)$
such that $\langle E_i, E_j \rangle_{L(K_6)}$ is connected and has a cycle for any $i \neq j$.} 
\end{center}
\end{figure}

Define the subtree $T_i$ of $K_n$ for $0 \leq i < \lfloor \frac{n}{2} \rfloor$
as follows, where the subscripts are expressed modulo $n$: 
$$\left\{ \begin{array}{l}
V(T_i) = V(K_n) \setminus \{ v_{i+\lfloor \frac{n+1}{2} \rfloor} \}, \\
E(T_i) = \left\{v_iv_{i+1},v_{i+1}v_{i-1},v_{i-1}v_{i+2},v_{i+2}v_{i-2},v_{i-2}v_{i+3},\ldots,
v_{i+\lfloor \frac{n-1}{2} \rfloor}v_{i+\lfloor \frac{n+3}{2} \rfloor} \right\}.
\end{array} \right.$$

\begin{figure}[t]
\begin{center}
\epsfig{file=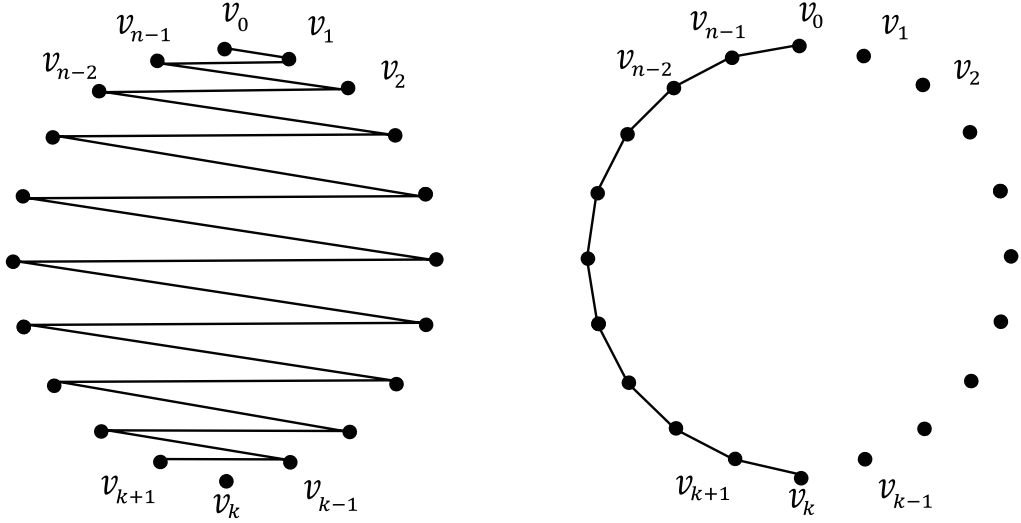,height=82mm}
\caption{The edges of $T_0$ and the edges in 
$E(K_n) \setminus \cup_{0 \leq i < k}E(T_i)$ for even $n = 2k$.} 
\end{center}
\end{figure}

\begin{figure}[t]
\begin{center}
\epsfig{file=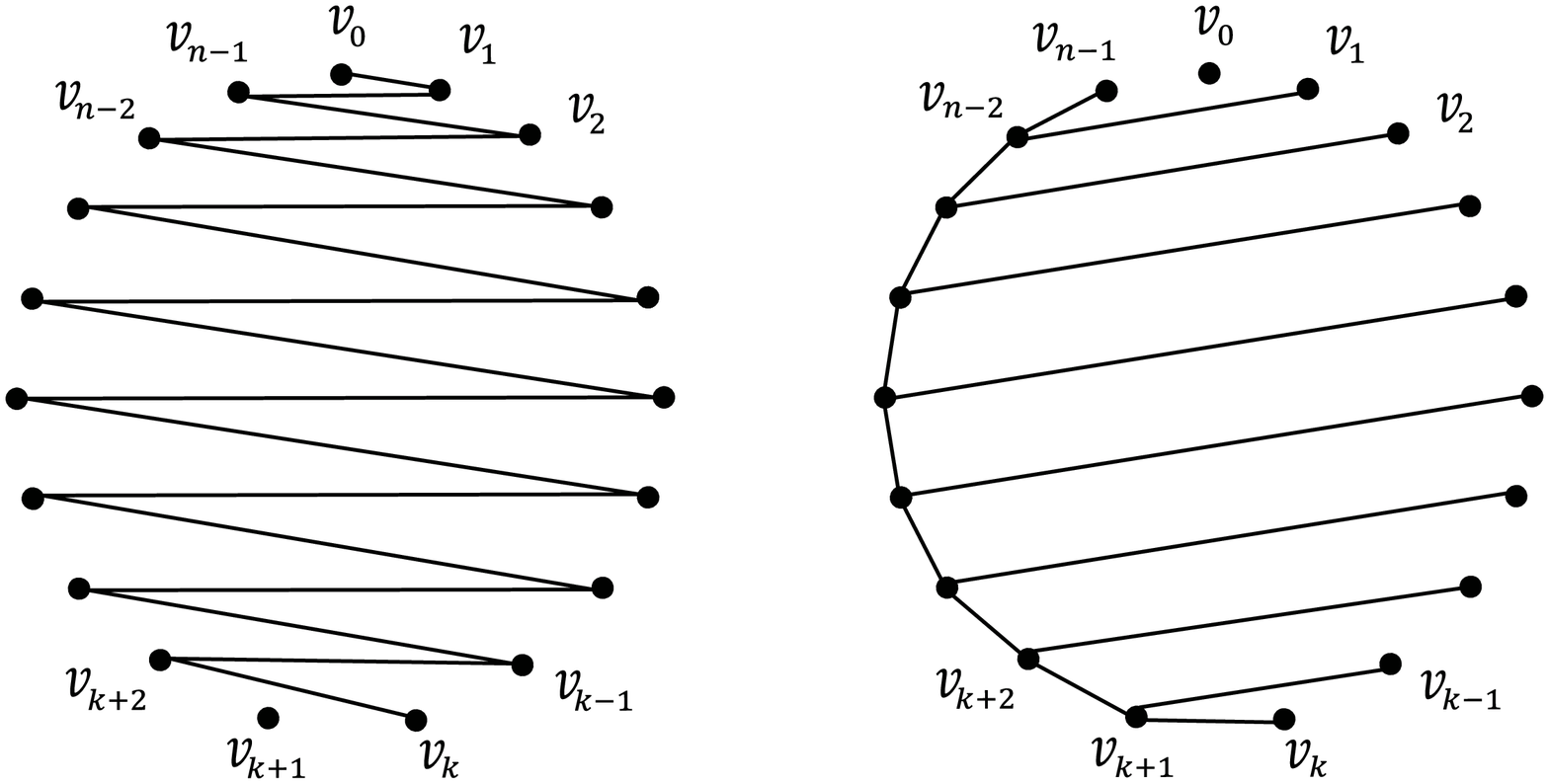,height=82mm}
\caption{The edges of $T_0$ and $T_k$ for odd $n = 2k+1$.} 
\end{center}
\end{figure}

Suppose that the vertices $v_0,v_1,\ldots,v_{n-1}$ are arranged in a regular $n$-gon.
Then, the edges of $T_i$ can be drawn as straight lines in a zig-zag fashion. 
Note that each $T_i$ is isomorphic to the path $P_{n-1}$ of order $n-1$ such that 
each $E(T_i)$ can be obtained from $E(T_0)$ by cyclic shifting along the vertices of the
regular $n$-gon $i$ times.
It can be checked that 
$T_0,T_1,\ldots,T_{\lfloor \frac{n}{2} \rfloor -1}$ are edge-disjoint such that
$$
E(K_n) \setminus \cup_{0 \leq i < \lfloor \frac{n}{2} \rfloor }E(T_i) = \left\{ 
\begin{array}{ll} 
\left\{v_{\frac{n}{2}}v_{\frac{n}{2}+1},
v_{\frac{n}{2}+1}v_{\frac{n}{2}+2},\ldots,v_{n-1}v_0 \right\} & \mbox{if $n$ is even,} \\[2mm]
\left\{v_{\frac{n-1}{2}}v_{\frac{n-1}{2}+1},
v_{\frac{n-1}{2}+1}v_{\frac{n-1}{2}+2},\ldots,v_{n-1}v_0 \right\} & \\
\cup 
\left\{v_{\frac{n-1}{2}+1}v_{\frac{n-1}{2}-1},v_{\frac{n-1}{2}+2}v_{\frac{n-1}{2}-2},\ldots,v_{n-2}v_1 \right\}
& \mbox{if $n$ is odd.}
\end{array} \right.$$
When $n$ is odd, we furthermore define $T_{\lfloor \frac{n}{2} \rfloor}$ as follows: 
$$\left\{ \begin{array}{l}
V(T_{\lfloor \frac{n}{2} \rfloor}) 
= V(K_n) \setminus \{ v_{0} \}, \\
E(T_{\lfloor \frac{n}{2} \rfloor}) 
= (E(K_n) \setminus \cup_{0 \leq i < \lfloor \frac{n}{2} \rfloor}E(T_i)) \setminus \{v_{n-1}v_0 \}.
\end{array} \right.$$
Note that the subtree $T_{\lfloor \frac{n}{2} \rfloor}$ is isomorphic to the graph 
obtained from the path $P_{\frac{n+1}{2}}$ of order $\frac{n+1}{2}$ 
by injectively joining new $\frac{n-3}{2}$ leaves to the internal vertices of $P_{\frac{n+1}{2}}$. 
Thus, every internal vertex of $T_{\lfloor \frac{n}{2} \rfloor}$ has degree 3.
Figures 4 and 5 illustrate the edges of $T_0$ and the edges in 
$E(K_n) \setminus \cup_{0 \leq i < k}E(T_i)$ 
for even $n = 2k$ and the edges of $T_0$ and $T_k$ for odd $n = 2k+1$, respectively.

Since $T_0,T_1,\ldots, T_{\lfloor \frac{n+1}{2}\rfloor-1}$ are edge-disjoint subtrees of order $n-1$
in $K_n$, we can see that 
$E(T_0),E(T_1),\ldots,E(T_{\lfloor \frac{n+1}{2}\rfloor-1})$ are $\lfloor \frac{n+1}{2}\rfloor$ 
disjoint connected dominating sets of $L(K_n)$.
Moreover, 
$\langle E(K_n) \setminus \cup_{0 \leq i < \lfloor \frac{n+1}{2} \rfloor}E(T_i) \rangle_{L(K_n)} 
\cong K_1$
(respectively, $P_{\frac{n}{2}}$) if $n$ is odd (respectively, even).
Thus, from Proposition \ref{CIST-fc} and the symmetry of $K_n$,
in order to show Theorem \ref{main-01},
it is sufficient to show that every component of 
$\langle E(T_i), E(T_j) \rangle_{L(G)}$ has a cycle for any $i \neq j$. 

Let $n = 2k+1$, where $k \geq 3$. 
We only show the case for odd $n \geq 7$, since 
the case for even $n \geq 8$ can be shown similarly to the following argument in Case 1.

\bigskip

\noindent Case 1: $\langle E(T_i), E(T_j) \rangle_{L(G)}$ for $0 \le i < j < k$.

Since $T_i$ is a path with the leaves $v_i$ and $v_{i+k}$, 
${\rm deg}_{T_i}(v_{i}) = {\rm deg}_{T_i}(v_{i+k}) = 1$ and ${\rm deg}_{T_i}(u) = 2$
for any $u \in V(T_i) \setminus \{v_i,v_{i+k}\}$.
Note that $v_{i+k+1} \not\in V(T_i)$ and $v_{i+k}v_{i+k+1} \not\in E(T_j)$.
For any edge $e$ in $E(T_j)$, if $e \neq v_iv_{i+k+1}$, then $e$ is adjacent to 
at least two edges of $T_i$, i.e., ${\rm deg}_{\langle E(T_i),E(T_j) \rangle_{L(G)}}(e) \geq 2$.
Suppose that $v_iv_{i+k+1} \in E(T_j)$.
Then, it holds that ${\rm deg}_{\langle E(T_i),E(T_j) \rangle_{L(G)}}(v_iv_{i+k+1}) = 1$ and
the neighbor of $v_iv_{i+k+1}$ in $\langle E(T_i),E(T_j) \rangle_{L(G)}$ is $v_iv_{i+1}$.
Moreover, it holds that either $v_iv_{i+k}, v_{i+k}v_{i+1} \in E(T_j)$ or 
$v_{i+k+1}v_{i+1}, v_{i+1}v_{i+k} \in E(T_j)$. 
Thus, in either case, it holds that 
${\rm deg}_{\langle E(T_i),E(T_j) \rangle_{L(G)}}(v_iv_{i+1}) \geq 3$.
Therefore, except for at most one edge in $E(T_j)$, every edge in $E(T_j)$ has degree at least 2
in $\langle E(T_i), E(T_j) \rangle_{L(G)}$.
Furthermore, even if $e \in E(T_j)$ has degree 1, the neighbor $n(e) \in E(T_i)$ has degree at least 3
in $\langle E(T_i), E(T_j) \rangle_{L(G)}$.
Such a property also holds for the edges of $E(T_i)$ in $\langle E(T_i), E(T_j) \rangle_{L(G)}$. 
Let $E' = (\{ v_iv_{i+k+1} \} \cap E(T_j)) \cup (\{ v_jv_{j+k+1}\} \cap E(T_i))$.
Since $\delta(\langle E(T_i), E(T_j) \rangle_{L(G)}-E') \geq 2$, 
every component of $\langle E(T_i), E(T_j) \rangle_{L(G)}-E'$ has a cycle. 
The graph $\langle E(T_i), E(T_j) \rangle_{L(G)}$ can be obtained from 
$\langle E(T_i), E(T_j) \rangle_{L(G)}-E'$ by adding leaves if $E' \neq \emptyset$.
Hence, every component of $\langle E(T_i), E(T_j) \rangle_{L(G)}$ has a cycle.

\bigskip

\noindent Case 2: $\langle E(T_i), E(T_k) \rangle_{L(G)}$ for $0 \le i < k$.

Let $e \in E(T_k)$.
If $e \not\in \{ v_iv_{i+k+1}, v_{i+k}v_{i+k+1} \}$, then  
${\rm deg}_{\langle E(T_i), E(T_k) \rangle_{L(G)}}(e) \geq 2$.  
In $\langle E(T_i), E(T_k) \rangle_{L(G)}$, $v_{i+k}v_{i+k+1} \in E(T_k)$ has degree 1, while 
its neighbor $v_{i+k}v_{i+k+2} \in E(T_i)$ has degree at least 3,
since ${\rm deg}_{T_k}(v_{i+k}) = 3$ or ${\rm deg}_{T_k}(v_{i+k+2}) = 3$.
Note that $v_{i+k+2} \not\in V(T_k)$ if $i = k-1$. 
Suppose that $v_iv_{i+k+1} \in E(T_k)$.
Then, $k$ is odd and $i = \frac{k-1}{2} \geq 1$.
Moreover, ${\rm deg}_{\langle E(T_i), E(T_k) \rangle_{L(G)}}(v_iv_{i+k+1}) = 1$
and the neighbor $v_iv_{i+1}$ of $v_iv_{i+k+1}$ has degree 2 in $\langle E(T_i), E(T_k) \rangle_{L(G)}$
such that $v_iv_{i+1}$ is adjacent to $v_{i+1}v_{i+k}$ in $\langle E(T_i), E(T_k) \rangle_{L(G)}$.
Since ${\rm deg}_{T_i}(v_{i+1}) = 2$ and ${\rm deg}_{T_i}(v_{i+k}) = 1$, 
it holds that ${\rm deg}_{\langle E(T_i), E(T_k) \rangle_{L(G)}}(v_{i+1}v_{i+k}) = 3$.
Therefore, the leaf $v_iv_{i+k+1}$ is connected to $v_{i+1}v_{i+k}$ with degree 3 through 
the induced subpath 
$\langle \{v_iv_{i+k+1}, v_iv_{i+1}, v_{i+1}v_{i+k}\} \rangle_{\langle E(T_i), E(T_k) \rangle_{L(G)}}$.

Let $e' \in E(T_i)$.
If $e' \not\in \{v_0v_1, v_0v_2, \ldots, v_0v_k\}$, then 
${\rm deg}_{\langle E(T_i), E(T_k) \rangle_{L(G)}}(e') \geq 2$.  
Suppose that $v_0v_p \in E(T_i)$ where $1 \leq p \leq k$. 
Note that $i = \lfloor \frac{p}{2} \rfloor$.
Since $v_0 \not\in V(T_k)$, ${\rm deg}_{\langle E(T_i), E(T_k) \rangle_{L(G)}}(v_0v_p) = 1$ and 
the neighbor of $v_0v_p$ in $\langle E(T_i), E(T_k) \rangle_{L(G)}$ is 
$v_pv_{n-1-p} \in E(T_k)$ (respectively, $v_pv_{p+1}$) if $p < k$ (respectively, $p = k$). 
If $p = k$, then $v_{p+1} \neq v_{i+k+1}$ since $i = \lfloor \frac{p}{2} \rfloor \geq 1$.
Thus, in such a case, ${\rm deg}_{\langle E(T_i), E(T_k) \rangle_{L(G)}}(v_pv_{p+1}) \geq 3$.
Suppose that $p < k$.  
If $v_{n-1-p} \neq v_{i+k+1}$, then $v_pv_{n-1-p}$ has degree at least 3
in $\langle E(T_i), E(T_k) \rangle_{L(G)}$.
Suppose that $v_{n-1-p} = v_{i+k+1}$.
Note that $p \geq 2$.
Since $v_{i+k+1} \not\in V(T_i)$, 
it holds that 
${\rm deg}_{\langle E(T_i), E(T_k) \rangle_{L(G)}}(v_pv_{n-1-p}) = 2$ and
the neighbor $n(v_pv_{n-1-p})$ of $v_pv_{n-1-p}$ different from $v_0v_p$ is 
$v_1v_p$ (respectively, $v_{n-1}v_p$) if $p$ is even (respectively, odd). 
Thus, $n(v_pv_{n-1-p})$ has degree 2 
in $\langle E(T_i), E(T_k) \rangle_{L(G)}$ and the neighbor $n(n(v_pv_{n-1-p}))$ 
of $n(v_pv_{n-1-p})$ different from $v_pv_{n-1-p}$ is $v_1v_{n-2}$ (respectively, $v_{n-1}v_{n-2}$)
if $p$ is even (respectively, odd).
Thus, $n(n(v_pv_{n-1-p}))$ is incident to $v_{n-2}$ in $T_k$.
Note that $v_{i+k+1} = v_{n-1-p} < v_{n-2}$. 
Therefore, $v_{n-2}$ has degree 2 in $T_i$ which means that 
$n(n(v_pv_{n-1-p}))$ has degree at least 3
in $\langle E(T_i), E(T_k) \rangle_{L(G)}$.
Hence, the leaf $v_0v_p$ is connected to $n(n(v_pv_{n-1-p}))$ with degree at least 3 through 
the induced subpath 
$\langle \{v_0v_p, v_pv_{n-1-p}, n(v_pv_{n-1-p}), n(n(v_pv_{n-1-p}) \} \rangle_{\langle E(T_i), E(T_k) \rangle_{L(G)}}$. 

Consequently, 
for any leaf in $\langle E(T_i), E(T_k) \rangle_{L(G)}$, 
it is adjacent or connected (through an induced path of order at most 4) 
to a vertex with degree at least 3.
In particular, $E(T_k)$ has at most two leaves in $\{v_iv_{i+k+1}, v_{i+k}v_{i+k+1}\}$ 
and at most three vertices in $\{v_{i+1}v_{i+k}, v_{2i}v_{n-1-2i}, v_{2i+1}v_{n-2-2i} \} \cup
\{v_1v_{n-2}, v_{n-2}v_{n-1}, v_kv_{k+1} \}$
with degree at least 3 adjacent or connected to a leaf in $E(T_i) \cup E(T_k)$.
Note that $E(T_k)$ indeed has the leaf $v_{i+k}v_{i+k+1}$.
On the other hand, $E(T_i)$ has at most two leaves in $\{v_0v_{2i}, v_0v_{2i+1}\}$ 
and the vertex $v_{i+k}v_{i+k+2}$ with degree at least 3 adjacent to the leaf 
$v_{i+k}v_{i+k+1} \in E(T_k)$.
Suppose that $v_iv_{i+k+1}$ is a leaf in $\langle E(T_i), E(T_k) \rangle_{L(G)}$. 
Since $i = \frac{k-1}{2} \geq 1$, it holds that $v_{k+1} \neq v_{i+k+1}$
and $E(T_i)$ has two leaves $v_0v_{k-1}$ and $v_0v_k$ adjacent to $v_{k-1}v_{k+1}$ and $v_kv_{k+1}$,
respectively, such that both $v_{k-1}v_{k+1}$ and $v_kv_{k+1}$ have degree at least 3.
If $v_iv_{i+k+1}$ is not a leaf in $\langle E(T_i), E(T_k) \rangle_{L(G)}$, i.e.,
$v_iv_{i+k+1} \not\in E(T_k)$, then 
there are at most two leaves adjacent or connected to a vertex in $E(T_k)$ 
with degree at least 3. 
Therefore, in any case, it does not happen that three leaves are adjacent or connected to 
the same vertex in $E(T_k)$ with degree at least 3. 
Thus, every leaf in $\langle E(T_i), E(T_k) \rangle_{L(G)}$
can be deleted (with an induced path of order at most 3) 
so that the resulting bipartite subgraph has at most one leaf.
Hence, every component in $\langle E(T_i), E(T_k) \rangle_{L(G)}$ has a cycle.
$\blacksquare$

\section{Proofs of Theorems \ref{main-1} and \ref{main-3} }

Edge-connectivity of a graph was characterized by Catlin (see \cite{CLS,OY}) in terms of 
edge-disjoint spanning trees as follows.

\begin{theorem} \label{char-edge-con}
Let $G$ be a connected graph and let $k$ be a positive integer.
Then, $\lambda(G) \geq k$ if and only if for any $X \subseteq E(G)$
with $|X| \leq \lceil \frac{k}{2} \rceil$, $\tau(G-X) \geq \lfloor \frac{k}{2} \rfloor$.
\end{theorem}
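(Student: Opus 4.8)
The plan is to derive both implications from the full Nash-Williams/Tutte partition characterization of $\tau$: a graph has $m$ edge-disjoint spanning trees if and only if for every partition $\mathcal{P}$ of $V(G)$ into $p \geq 2$ parts, the number $e_{\mathcal{P}}(G)$ of edges joining distinct parts satisfies $e_{\mathcal{P}}(G) \geq m(p-1)$ (of which the stated Theorem \ref{NWT} is a consequence). Throughout I would write $a = \lceil \frac{k}{2} \rceil$ and $b = \lfloor \frac{k}{2} \rfloor$, so that $a+b = k$ and $a-b \in \{0,1\}$, and I may assume $k \geq 2$, since for $k=1$ both sides merely assert that the connected graph $G$ remains spanned.

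For the forward direction, assume $\lambda(G) \geq k$ and fix $X \subseteq E(G)$ with $|X| \leq a$. To show $\tau(G-X) \geq b$ it suffices, by the partition characterization, to verify $e_{\mathcal{P}}(G-X) \geq b(p-1)$ for every partition $\mathcal{P}$ into $p \geq 2$ parts. Since $\lambda(G) \geq k$, the edge boundary of each part (a nonempty proper subset of $V(G)$) has size at least $k$; summing over the $p$ parts counts each crossing edge twice, so $2 e_{\mathcal{P}}(G) \geq pk$, i.e. $e_{\mathcal{P}}(G) \geq \lceil \frac{pk}{2} \rceil$. Deleting $X$ destroys at most $|X| \leq a$ crossing edges, whence $e_{\mathcal{P}}(G-X) \geq \lceil \frac{pk}{2} \rceil - a$, and it remains to check the arithmetic inequality $\lceil \frac{pk}{2} \rceil - a \geq b(p-1)$. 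I would do this by splitting on the parity of $k$: when $k = 2b$ it reduces to $pb - b \geq b(p-1)$, an equality, and when $k = 2b+1$ it reduces to $pb + \lceil \frac{p}{2} \rceil - (b+1) \geq pb - b$, i.e. $\lceil \frac{p}{2} \rceil \geq 1$, which holds for all $p \geq 1$. This settles the forward implication.

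For the converse I would argue by contraposition. Suppose $\lambda(G) \leq k-1$, let $F = \partial S$ be a minimum edge-cut with $|F| = \lambda(G)$, and take $X$ to be any subset of $F$ of size $\min\{\lambda(G), a\}$, so that $|X| \leq a$. Applying the upper-bound half of the partition characterization to the two-part partition $\{S, V(G)\setminus S\}$ gives $\tau(G-X) \leq e_{\{S, V(G)\setminus S\}}(G-X) = \lambda(G) - |X| = \max\{0, \lambda(G) - a\}$, where the middle equality uses $X \subseteq F$. Since $\lambda(G) \leq k-1$ and $(k-1) - a = b-1$ in both parities, this is at most $\max\{0, b-1\} = b-1$. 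Hence $\tau(G-X) < b$ while $|X| \leq a$, contradicting the hypothesis, so $\lambda(G) \geq k$.

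Both halves become short once the partition form of Nash-Williams/Tutte is in hand, so the point to get right is that I am invoking the \emph{full} partition characterization (both its lower-bound sufficiency and its upper-bound necessity), not merely the $2k$-edge-connected corollary recorded as Theorem \ref{NWT}. The only genuinely fiddly step is the parity case analysis in the forward direction: one must confirm that the slack created by the ceiling in $\lceil \frac{pk}{2}\rceil$ exactly absorbs the extra unit $a-b$ lost to deleting $X$ when $k$ is odd, which is precisely where a careless off-by-one would break the bound. I would therefore treat even and odd $k$ separately, as above.
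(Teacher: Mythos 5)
Your proof is correct; the main thing to note is that there is no proof in the paper to compare it against --- Theorem \ref{char-edge-con} is imported as Catlin's theorem, with pointers to \cite{CLS,OY}, and is never proved internally. Your derivation from the full Nash--Williams/Tutte partition theorem (which, as you rightly stress, is strictly stronger than the corollary recorded as Theorem \ref{NWT}) is the standard route and matches how this equivalence is established in the cited literature. The details are sound: in the forward direction, summing the boundary bound $|\partial V_i| \geq k$ over the $p$ blocks gives $2e_{\mathcal{P}}(G) \geq pk$, hence $e_{\mathcal{P}}(G) \geq \lceil pk/2 \rceil$ by integrality, and your parity split correctly verifies $\lceil pk/2 \rceil - \lceil k/2 \rceil \geq \lfloor k/2 \rfloor (p-1)$ for all $p \geq 2$, the ceiling supplying exactly the one unit needed when $k$ is odd. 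In the converse you only use the trivial ($p=2$) half of the partition theorem --- edge-disjoint spanning trees each consume at least one edge of any cut --- together with the identity $(k-1) - \lceil k/2 \rceil = \lfloor k/2 \rfloor - 1$, which holds in both parities; the subcase $\lambda(G) \leq \lceil k/2 \rceil$, where $X$ is the whole cut and $G-X$ is disconnected outright, is absorbed by your $\max\{0,\cdot\}$ bookkeeping. Two pedantic remarks only: the converse genuinely needs $\lfloor k/2 \rfloor \geq 1$, i.e.\ $k \geq 2$, which you correctly dispatched at the outset; and for $k=1$ the right-hand side is vacuous ($\tau(G-X) \geq 0$), so the equivalence holds for every connected graph on at least two vertices, the one-vertex graph (with the convention $\lambda(K_1)=0$) being the only degenerate exception --- a slight correction to your framing of that case, but one with no bearing on the argument.
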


A $p$-$q$-{\em edge-cut} of a connected graph $G$ is an edge-cut $F$ of $G$ 
such that one component of $G-F$
has at least $p$ vertices and another component of $G-F$ has at least $q$ vertices.
The $p$-$q$-{\em restricted edge-connectivity} $\lambda_{p,q}(G)$ of $G$ is defined to be 
the minimum cardinality of a $p$-$q$-edge-cut of $G$ if $G$ has a $p$-$q$-edge-cut.  
It has been shown in \cite{EH} that 
except for a star, any graph of order at least 4 has a $2$-$2$-edge-cut. 

Hellwig et al. proved that the connectivity of $L(G)$ is the same as 
the 2-2-restricted edge-connectivity of $G$.
In particular, it follows from Theorem \ref{con-L} that $\kappa(L(K_n)) = 2n-4$ for all $n \geq 4$. 

\begin{theorem} \label{con-L} {\rm \cite{HRV}}
Let $G$ be a connected graph of order $n \geq 4$ such that $G$ is not a star.
Then, $\kappa(L(G)) = \lambda_{2,2}(G)$.
\end{theorem}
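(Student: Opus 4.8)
The plan is to translate vertex cuts of $L(G)$ directly into edge-cuts of $G$ by exploiting the elementary identity $L(G)-W = L(G-W)$ for any $W \subseteq E(G) = V(L(G))$, where on the left $W$ is deleted as a set of vertices and on the right as a set of edges. This holds because in both graphs the vertex set is $E(G)\setminus W$ and two edges are adjacent exactly when they share an endpoint in $G$, a condition unaffected by deleting other edges. Combined with the basic fact that a line graph $L(H)$ is disconnected precisely when the edges of $H$ lie in at least two distinct connected components of $H$ (equivalently, when $G$ has at least two components of order $\geq 2$ after the deletion), this identifies a vertex cut $W$ of $L(G)$ with an edge set $W$ such that $G-W$ has at least two edge-bearing components. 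I would first record that since $G$ is connected of order $n \geq 4$ and not a star, $L(G)$ is not complete, so $\kappa(L(G))$ is a genuine separation number, and by \cite{EH} the graph $G$ has a $2$-$2$-edge-cut, so $\lambda_{2,2}(G)$ is well defined.

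For $\kappa(L(G)) \leq \lambda_{2,2}(G)$, I take a minimum $2$-$2$-edge-cut $F$ of $G$, so $G-F$ has two components $C_1, C_2$ of order $\geq 2$, each carrying an edge. Deleting the vertices $F$ from $L(G)$ yields $L(G-F)$, in which an edge of $C_1$ and an edge of $C_2$ lie in distinct components; hence $L(G)-F$ is disconnected and $F$ is a vertex cut of $L(G)$, giving $\kappa(L(G)) \leq |F| = \lambda_{2,2}(G)$.

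For the reverse inequality, I take a minimum vertex cut $W$ of $L(G)$, so $L(G-W)$ is disconnected; let $C_1$ be a component of $G-W$ containing an edge while some other component $C_2$ also contains an edge. Putting $A = V(C_1)$, every edge of $G$ joining $A$ to $V(G)\setminus A$ must lie in $W$, since $C_1$ is a full component of $G-W$; thus the edge set of $\langle A, V(G)\setminus A\rangle_G$ is contained in $W$. Moreover $\langle A\rangle_G$ contains the edges of $C_1$ and $\langle V(G)\setminus A\rangle_G$ contains the edges of $C_2$, so this boundary is a $2$-$2$-edge-cut of $G$ of size at most $|W|$, yielding $\lambda_{2,2}(G) \leq |W| = \kappa(L(G))$. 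Combining the two inequalities proves the theorem.

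The only real obstacle is the boundary bookkeeping in the reverse direction: one must choose $A$ to be the vertex set of a \emph{single} edge-bearing component of $G-W$ rather than a union of several, so that all boundary edges are forced into $W$, while simultaneously confirming that the complementary side still carries an edge. The corner cases—ensuring $L(G)$ is not complete and that a $2$-$2$-edge-cut exists—are handled by the standing hypotheses on $G$ together with the cited fact from \cite{EH}.
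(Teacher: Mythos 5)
Your proof is correct, but there is nothing in the paper to compare it against: the paper imports this theorem from the literature (it is stated with the citation \cite{HRV}) and gives no proof of it, so your argument is judged on its own. It stands up. The key identity $L(G)-W=L(G-W)$ for $W\subseteq E(G)$ is sound, as is the fact that a line graph $L(H)$ is disconnected exactly when at least two components of $H$ carry an edge (equivalently, have order at least $2$). In the forward direction, a minimum $2$-$2$-edge-cut $F$ leaves two components $C_1,C_2$ of order at least $2$, hence each with an edge, and these edges lie in different components of $L(G-F)=L(G)-F$, so $F$ is indeed a vertex cut of $L(G)$, giving $\kappa(L(G))\leq\lambda_{2,2}(G)$. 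In the reverse direction, your choice of $A=V(C_1)$ for a \emph{single} edge-bearing component $C_1$ of $G-W$ is exactly what makes the bookkeeping work: every edge of $G$ between $A$ and $V(G)\setminus A$ must lie in $W$ (otherwise it would survive in $G-W$ and enlarge $C_1$), the graph $G$ restricted to $A$ after removing this boundary is all of $\langle A\rangle_G$, which is connected because $C_1$ is a spanning connected subgraph of it, and the complementary side contains the second edge-bearing component $C_2$; so the boundary is a $2$-$2$-edge-cut of size at most $|W|$, giving $\lambda_{2,2}(G)\leq\kappa(L(G))$. You also use the hypotheses exactly where they are needed: connectedness, order at least $4$, and not being a star guarantee that $G$ has two non-adjacent edges (a connected graph of order at least $4$ whose edges pairwise meet is a star), hence $L(G)$ is non-complete and a minimum vertex cut exists, while the cited fact from \cite{EH} guarantees a $2$-$2$-edge-cut exists so that $\lambda_{2,2}(G)$ is well defined.
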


Combining Corollary \ref{line-com} and Theorems \ref{char-edge-con} and \ref{con-L},
we have the following.

\begin{theorem} \label{S5-main1}
Let $G$ be a connected graph.
For any proper subset $S \subset V(L(G))$ 
with $|S| \leq \left\lceil \frac{\min\{\delta(G),\kappa(L(G))\}}{2} \right\rceil$, 
$L(G)-S$ has $\left\lfloor \frac{\min\{\delta(G),\kappa(L(G))\}}{2} \right\rfloor$ 
completely independent spanning trees.
Moreover, if $G$ is not super edge-connected, then 
for any $S \subset V(L(G))$ with $|S| \leq \left\lceil \frac{\kappa(L(G))}{2} \right\rceil$, 
$L(G)-S$ has $\left\lfloor \frac{\kappa(L(G))}{2} \right\rfloor$ 
completely independent spanning trees.
\end{theorem}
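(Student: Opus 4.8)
The plan is to derive Theorem \ref{S5-main1} by combining the machinery already assembled in the excerpt, treating the edge-connectivity characterization of Catlin (Theorem \ref{char-edge-con}) as the central tool and Corollary \ref{line-com} as the bridge from edge-disjoint spanning trees in $G$ to completely independent spanning trees in $L(G)$. The key observation is that deleting a set $S$ of at most $\lceil \kappa(L(G))/2 \rceil$ vertices from $L(G)$ corresponds, by the identification $V(L(G)) = E(G)$, to deleting a set $X = S$ of at most that many edges from $G$. So the whole theorem is really a statement about what happens to $\tau(G-X)$ when a few edges are removed from $G$, after which Corollary \ref{line-com} applied to the graph $L(G-X)$ (or a suitable subgraph of $L(G)-S$) finishes the job.

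First I would set $k = \min\{\delta(G),\kappa(L(G))\}$ and note that I want to invoke Theorem \ref{char-edge-con} with this $k$. To do so I need $\lambda(G) \geq k$. Here I would use the inequality $\kappa(L(G)) = \lambda_{2,2}(G)$ from Theorem \ref{con-L} together with the general fact $\lambda_{2,2}(G) \le \lambda(G)$ whenever a $2$-$2$-edge-cut exists: a minimum edge-cut that is not $2$-$2$ must isolate a single vertex, hence has size $\delta(G) \geq k$, while a $2$-$2$-edge-cut has size $\lambda_{2,2}(G) = \kappa(L(G)) \geq k$; in either case $\lambda(G) \geq k$. With $\lambda(G) \geq k$ in hand, Theorem \ref{char-edge-con} says that for any $X \subseteq E(G)$ with $|X| \leq \lceil k/2 \rceil$ we have $\tau(G-X) \geq \lfloor k/2 \rfloor$. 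Reinterpreting $X$ as the deleted vertex set $S \subset V(L(G))$, and applying Corollary \ref{line-com} to $G-X$ (so that $L(G-X)$, which is the subgraph of $L(G)-S$ induced by the surviving edges, has $\tau(G-X) \geq \lfloor k/2 \rfloor$ completely independent spanning trees), yields $\lfloor \min\{\delta(G),\kappa(L(G))\}/2 \rfloor$ completely independent spanning trees in $L(G)-S$. The only care needed is to check that $L(G-X)$ is indeed a spanning subgraph of $L(G)-S$ on the same vertex set $E(G)\setminus X$, which is immediate from the definition of the line graph.

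For the second, stronger statement, the hypothesis is that $G$ is not super edge-connected, which means some minimum edge-cut does \emph{not} isolate a vertex; equivalently there is a $2$-$2$-edge-cut of size $\lambda(G)$, so $\lambda_{2,2}(G) = \lambda(G)$ and hence $\kappa(L(G)) = \lambda(G)$ by Theorem \ref{con-L}. Now I would rerun the previous argument with $k = \kappa(L(G)) = \lambda(G)$, dropping the $\min$ with $\delta(G)$ entirely: since $\lambda(G) = \kappa(L(G))$ directly, Theorem \ref{char-edge-con} applies with this $k$ without any appeal to the bound $\delta(G) \ge k$, giving $\tau(G-X) \geq \lfloor \kappa(L(G))/2 \rfloor$ for every $X$ with $|X| \leq \lceil \kappa(L(G))/2 \rceil$, and Corollary \ref{line-com} again converts this into the desired completely independent spanning trees in $L(G)-S$.

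The main obstacle I anticipate is the clean justification that $\lambda(G) \geq \min\{\delta(G),\kappa(L(G))\}$ in the first part and that non-super-edge-connectivity forces $\kappa(L(G)) = \lambda(G)$ in the second — in other words, correctly translating between minimum edge-cuts, restricted edge-cuts, and the line-graph connectivity via Theorem \ref{con-L}. One must handle the degenerate cases where $G$ has no $2$-$2$-edge-cut (by the cited result of \cite{EH}, only stars, which are excluded by Theorem \ref{con-L}'s hypotheses), and confirm that $\lambda_{2,2}(G) \le \lambda(G)$ exactly when such a cut exists. Once that dictionary between edge-connectivity notions is pinned down, the remainder is a routine chaining of Theorems \ref{char-edge-con}, \ref{con-L} and Corollary \ref{line-com}.
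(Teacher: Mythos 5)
Your core chain of reasoning---identify $S \subset V(L(G))$ with an edge set $X \subseteq E(G)$, note $L(G-X) = L(G)-S$, bound $\lambda(G)$ from below via Theorem \ref{con-L}, then apply Theorem \ref{char-edge-con} followed by Corollary \ref{line-com}---is exactly the paper's proof, and your treatment of the second statement (non-super-edge-connectivity forces $\lambda_{2,2}(G) = \lambda(G) = \kappa(L(G))$, so the $\min$ with $\delta(G)$ can be dropped) also matches. However, the ``general fact'' you invoke twice, namely $\lambda_{2,2}(G) \le \lambda(G)$ whenever a $2$-$2$-edge-cut exists, is stated backwards and is false in general: every $2$-$2$-edge-cut is an edge-cut, so $\lambda_{2,2}(G) \ge \lambda(G)$ always, and the inequality can be strict (attach a pendant vertex to $K_5$: then $\lambda = 1$ but $\lambda_{2,2} = 4$). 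The equality $\lambda_{2,2}(G) = \lambda(G)$ is precisely the meaning of ``$G$ is not super edge-connected,'' not a general fact. Fortunately this misstatement is not load-bearing: the case analysis you actually run (a minimum edge-cut either isolates a vertex, hence has size at least $\delta(G)$, or is a $2$-$2$-edge-cut, hence has size at least $\lambda_{2,2}(G) = \kappa(L(G))$) correctly yields $\lambda(G) \ge \min\{\delta(G), \kappa(L(G))\}$, which is all the first statement needs, and is the same dichotomy the paper phrases via super-edge-connectedness.

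The genuine gap is your handling of the degenerate cases. Theorem \ref{S5-main1} is asserted for every connected graph $G$, so observing that stars are ``excluded by Theorem \ref{con-L}'s hypotheses'' is a non sequitur: it does not exempt you from proving the statement for stars (and for graphs of order at most $3$, which you omit entirely and which also fall outside Theorem \ref{con-L}); it only means your main tool is unavailable there. The paper closes these cases separately: if $\left\lfloor \min\{\delta(G),\kappa(L(G))\}/2 \right\rfloor \le 1$ the conclusion is vacuous or follows from mere connectivity of $L(G)-S$, while if the floor is at least $2$ then $\delta(G) \ge 4$, which forces $|V(G)| \ge 5$ and $G$ not a star, so Theorem \ref{con-L} legitimately applies. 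Within your own framework there is an even cheaper repair: if $G$ has no $2$-$2$-edge-cut at all (by \cite{EH} this covers exactly the stars and the graphs of order less than $4$), then every minimum edge-cut isolates a vertex, so $\lambda(G) \ge \delta(G) \ge \min\{\delta(G),\kappa(L(G))\}$ holds with no appeal to Theorem \ref{con-L}, and the rest of your argument (Theorem \ref{char-edge-con} plus Corollary \ref{line-com}) goes through verbatim. With that case added and the inequality direction corrected, your proof is complete and coincides with the paper's.
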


\begin{proof}
If $\left\lfloor \frac{\min\{\delta(G),\kappa(L(G))\}}{2} \right\rfloor = 0$, then the theorem vacuously holds.
Suppose that $\left\lfloor \frac{\min\{\delta(G),\kappa(L(G))\}}{2} \right\rfloor = 1$.
Then, $1 \leq \left\lceil \frac{\min\{\delta(G),\kappa(L(G))\}}{2} \right\rceil \leq 2$ and 
$\kappa(L(G)) \geq 2$.
If $\kappa(L(G)) \geq 3$, then $L(G)-S$ is connected. 
If $\kappa(L(G)) = 2$, then
$\left\lceil \frac{\min\{\delta(G),\kappa(L(G))\}}{2} \right\rceil = 1$ and 
$L(G)-S$ is also connected. 
Thus, the theorem holds since $L(G)-S$ has a spanning tree.  

Suppose that $\left\lfloor \frac{\min\{\delta(G),\kappa(L(G))\}}{2} \right\rfloor \geq 2$.
Then, $\delta(G) \geq 4$. 
Thus, the order of $G$ is at least 5 and $G$ is not a star.
Therefore, from Theorem \ref{con-L}, it follows that $\kappa(L(G)) = \lambda_{2,2}(G)$.
By definition, $\delta(G) \geq \lambda(G)$ and $\lambda_{2,2}(G) \geq \lambda(G)$, i.e.,
$\min\{ \delta(G), \lambda_{2,2}(G) \} \geq \lambda(G)$.
If $G$ is super edge-connected, then $\lambda(G) = \delta(G)$.
If $G$ is not super edge-connected, then $\lambda(G) = \lambda_{2,2}(G)$.
Thus, 
$\lambda(G) = \min\{ \delta(G), \lambda_{2,2}(G) \} = \min\{ \delta(G), \kappa(L(G)) \}$.
Hence, the statements in Theorem \ref{S5-main1} 
follows from Corollary \ref{line-com} and Theorem \ref{char-edge-con}.
Note that it holds that $L(G-S) = L(G)-S$. 
$\blacksquare$
\end{proof}

\bigskip

Applying Theorem \ref{S5-main1} to $2k$-connected line graphs, 
we have the following from which Theorem \ref{main-1} is obtained by setting
$S = \emptyset$.

\begin{theorem} \label{ext}
For any $2k$-connected line graph $L(G)$ and any $S \subset V(L(G))$ with $|S| \leq k$,
$L(G)-S$ has $k$ completely independent spanning trees 
if $G$ is not super edge-connected or $\delta(G) \geq 2k$. 
\end{theorem}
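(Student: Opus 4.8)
The plan is to derive Theorem~\ref{ext} as a direct consequence of Theorem~\ref{S5-main1}, which already handles the deletion of a small vertex set from $L(G)$. The key observation is that a $2k$-connected line graph satisfies $\kappa(L(G)) \geq 2k$, so $\left\lfloor \frac{\kappa(L(G))}{2} \right\rfloor \geq k$ and $\left\lceil \frac{\kappa(L(G))}{2} \right\rceil \geq k$. Thus the hypothesis $|S| \leq k$ falls within the allowable range in Theorem~\ref{S5-main1}, and the desired $k$ completely independent spanning trees in $L(G)-S$ should follow once we verify that $\min\{\delta(G),\kappa(L(G))\}$ (or $\kappa(L(G))$ alone, in the non-super-edge-connected case) is at least $2k$.

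First I would split into the two cases given in the hypothesis. In the case $\delta(G) \geq 2k$, I would combine this with $\kappa(L(G)) \geq 2k$ to conclude $\min\{\delta(G),\kappa(L(G))\} \geq 2k$, so that $\left\lfloor \frac{\min\{\delta(G),\kappa(L(G))\}}{2} \right\rfloor \geq k$ and $\left\lceil \frac{\min\{\delta(G),\kappa(L(G))\}}{2} \right\rceil \geq k \geq |S|$; the first part of Theorem~\ref{S5-main1} then yields at least $k$ completely independent spanning trees in $L(G)-S$ (using at most $k$ of the trees it provides). In the case that $G$ is not super edge-connected, I would instead invoke the second part of Theorem~\ref{S5-main1}: since $\kappa(L(G)) \geq 2k$, we have $\left\lceil \frac{\kappa(L(G))}{2} \right\rceil \geq k \geq |S|$ and $\left\lfloor \frac{\kappa(L(G))}{2} \right\rfloor \geq k$, so $L(G)-S$ again has at least $k$ completely independent spanning trees, independently of $\delta(G)$.

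A minor technical point I would address is the degenerate possibility that $L(G)$ is so small that $S$ cannot be a proper subset of $V(L(G))$, or that $G$ is a star; but $2k$-connectivity with $k \geq 2$ forces $\kappa(L(G)) \geq 4$, hence $|V(L(G))|$ is large and $G$ is not a star, so Theorem~\ref{con-L} applies and the framework of Theorem~\ref{S5-main1} is available without obstruction. I would also note explicitly that taking $S = \emptyset$ recovers Theorem~\ref{main-1}, as remarked just before the statement.

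The main obstacle, such as it is, is purely bookkeeping: I must make sure the floor and ceiling inequalities are applied in the correct direction, since Theorem~\ref{S5-main1} guarantees exactly $\left\lfloor \frac{\min\{\delta(G),\kappa(L(G))\}}{2} \right\rfloor$ trees for sets of size up to $\left\lceil \frac{\min\{\delta(G),\kappa(L(G))\}}{2} \right\rceil$, and I need at least $k$ trees tolerating up to $k$ deletions. The substantive content has already been absorbed into Theorem~\ref{S5-main1} (via Corollary~\ref{line-com}, Theorem~\ref{char-edge-con}, and Theorem~\ref{con-L}), so no new combinatorial construction is required here; the proof reduces to checking that the quantity $2k$ threads correctly through the two branches of the hypothesis.
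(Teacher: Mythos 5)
Your proposal is correct and follows the paper's own route exactly: the paper obtains Theorem~\ref{ext} as an immediate application of Theorem~\ref{S5-main1}, splitting on the two branches of the hypothesis just as you do (when $\delta(G) \geq 2k$, combine with $\kappa(L(G)) \geq 2k$ to get $\min\{\delta(G),\kappa(L(G))\} \geq 2k$ and use the first statement; when $G$ is not super edge-connected, use the second statement with $\kappa(L(G)) \geq 2k$), and taking a subset of the guaranteed trees when the floor exceeds $k$. One side remark in your write-up is false, though harmlessly so: $2k$-connectedness of $L(G)$ does \emph{not} force $G$ to be a non-star, since $L(K_{1,n}) \cong K_n$ is $(n-1)$-connected; what actually makes stars irrelevant is the theorem's own hypothesis (a star is super edge-connected with $\delta(G)=1 < 2k$, so the statement is vacuous for stars), and in any case Theorem~\ref{S5-main1} is stated for arbitrary connected $G$, so no non-star check is needed before invoking it.
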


The restrictions in Theorem \ref{main-1} can be weakened as follows.

\begin{theorem} \label{weak-con1}
For any $k \geq 2$, 
every $2k$-connected line graph $L(G)$ has $k$ completely independent 
spanning trees if $G$ is a star or there exists a graph $G^\ast \supseteq G$ with 
$|E(G^\ast) \setminus E(G)| \leq k$ 
such that $G^\ast$ is not super edge-connected or $\delta(G^\ast) \geq 2k$. 
\end{theorem}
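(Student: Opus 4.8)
My plan is to split along the two alternatives in the hypothesis. The star case is immediate: if $G=K_{1,m}$ then $L(G)=K_m$, and $2k$-connectivity forces $m-1=\kappa(K_m)\ge 2k$, i.e. $m\ge 2k+1$. I would then choose $k$ pairwise disjoint pairs $V_1,\ldots,V_k$ of adjacent vertices of $K_m$ (possible since $2k\le m$); each $V_i$ is a connected dominating set of $K_m$, and for $i\ne j$ the graph $\langle V_i,V_j\rangle_{K_m}$ is a $K_{2,2}$, hence contains a cycle, so Theorem \ref{char-3} yields the $k$ completely independent spanning trees.

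For the other case, fix a witness $G^\ast\supseteq G$, set $F:=E(G^\ast)\setminus E(G)$ with $|F|\le k$, and note that $L(G)=L(G^\ast)-F$, where $F$ is read as a set of at most $k$ vertices of $L(G^\ast)$ (the identity $L(G^\ast-F)=L(G^\ast)-F$ is already used in Section 5). Since $G$ is not a star and $\kappa(L(G))\ge 2k\ge 4$ forces $G$ connected of order at least $4$, Theorem \ref{con-L} gives $\lambda_{2,2}(G)=\kappa(L(G))\ge 2k$. The natural attempt is to apply Theorem \ref{ext} to $G^\ast$ with deletion set $F$; this requires $L(G^\ast)$ to be $2k$-connected, i.e. $\lambda_{2,2}(G^\ast)\ge 2k$. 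I would establish this by examining an arbitrary $2$-$2$-edge-cut of $G^\ast$ with sides $X$ and its complement: if both sides still carry an edge of $G$ then the same partition is a $2$-$2$-edge-cut of $G$, so its size is at least $\lambda_{2,2}(G)\ge 2k$; otherwise the cut is \emph{degenerate}, meaning one side is independent in $G$ and therefore spanned inside $G^\ast$ only by edges of $F$. When $\delta(G^\ast)\ge 2k$ the degenerate possibility is harmless: writing $Z$ for the side that is independent in $G$, all edges of $\langle Z\rangle_{G^\ast}$ lie in $F$, so the number of edges leaving $Z$ in $G^\ast$ is at least ${\rm deg}$-sum minus twice the inner edges, namely $\sum_{z\in Z}{\rm deg}_{G^\ast}(z)-2|E(\langle Z\rangle_{G^\ast})|\ge 2k|Z|-2|F|\ge 2k(|Z|-1)\ge 2k$. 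Thus every $2$-$2$-edge-cut of $G^\ast$ has size $\ge 2k$, so $\lambda_{2,2}(G^\ast)\ge 2k$ and Theorem \ref{ext} finishes this sub-case.

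The genuinely delicate situation is when $G^\ast$ is only guaranteed to be \emph{not} super edge-connected. Here $\delta(G^\ast)$ may be small, and an added edge of $F$ can appear in $L(G^\ast)$ as a vertex of degree as low as $2$; consequently $\kappa(L(\cdot))$ is \emph{not} monotone under edge addition, and $L(G^\ast)$ need not be $2k$-connected, so Theorem \ref{ext} cannot be routed through $G^\ast$. If $G^\ast$ has no degenerate cut of size less than $2k$, the comparison above still gives $\lambda_{2,2}(G^\ast)\ge 2k$ and we are done as before; otherwise I would discard $G^\ast$ and invoke the $\tau'$-bound of Theorem \ref{main-0}. Taking $A$ to be the independent (hence edge-free) smaller side of a degenerate \emph{minimum} restricted edge-cut of $G^\ast$, the set $A$ is a star-subset of $V(G)$ with $E(\langle A\rangle_G)=\emptyset$, so $\zeta(A)=\infty$ and it suffices to prove $\tau(G-A)\ge k$: then $\tau^\ast(L(G))\ge\tau'(G)\ge\min\{\tau(G-A),\zeta(A)\}=\tau(G-A)\ge k$.

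The step I expect to be the main obstacle is exactly this inequality $\tau(G-A)\ge k$. Because $A$ meets the rest of $G$ along a cut that is small only in $G^\ast$, the crude estimate $\lambda(G-A)\ge 2k$ can fail, so I would instead argue through Nash--Williams/Tutte (Theorem \ref{NWT}) and Catlin's characterization (Theorem \ref{char-edge-con}): for every partition of $V(G)\setminus A$ I would bound its number of crossing edges from below using that every genuine $2$-$2$-edge-cut of $G$ has at least $\lambda_{2,2}(G)\ge 2k$ edges, together with the control $|A|\le k+1$ (forced by $\langle A\rangle_{G^\ast}$ being connected and $E(\langle A\rangle_{G^\ast})\subseteq F$). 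Making this partition estimate precise, and disposing of the boundary possibility that the independent side of a degenerate cut is the \emph{large} side (which can be shown to force $G$ to have few vertices), is where the real work lies.
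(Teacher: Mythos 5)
Your star case is exactly the paper's argument, and your treatment of the case $\delta(G^\ast)\ge 2k$ is correct and complete: for any $2$-$2$-edge-cut $F^\ast$ of $G^\ast$, either both large components still contain an edge of $G$, in which case $F^\ast\cap E(G)$ is a $2$-$2$-edge-cut of $G$ and $|F^\ast|\ge\lambda_{2,2}(G)=\kappa(L(G))\ge 2k$, or one component $Z$ is independent in $G$, in which case your count $|F^\ast|\ge\sum_{z\in Z}{\rm deg}_{G^\ast}(z)-2|E(\langle Z\rangle_{G^\ast})|\ge 2k|Z|-2k\ge 2k$ applies; hence $\kappa(L(G^\ast))=\lambda_{2,2}(G^\ast)\ge 2k$ and Theorem \ref{ext} applied to $G^\ast$ with $S=E(G^\ast)\setminus E(G)$ yields $k$ completely independent spanning trees in $L(G^\ast)-S=L(G)$.

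The gap is the one you flag yourself: when $G^\ast$ is only assumed not super edge-connected and has a degenerate $2$-$2$-edge-cut of size less than $2k$, you discard $G^\ast$, propose the star-subset route through Theorem \ref{main-0}, and leave the decisive inequality $\tau(G-A)\ge k$ unproven. As submitted, the proposal therefore does not prove the theorem; that case is not a boundary nuisance but a genuine remaining case (your own framework shows it occurs exactly when $G$ has an independent set of at least two vertices whose total $G$-degree is small, which is compatible with all the hypotheses).

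You should know, however, that your skepticism about routing through $G^\ast$ is vindicated: the paper's own proof of the non-star case consists of the assertion that $\kappa(L(G^\ast))=\lambda_{2,2}(G^\ast)\ge\lambda_{2,2}(G)\ge 2k$ ``since $G^\ast$ is obtained from $G$ by adding new edges,'' followed by an application of Theorem \ref{ext} to $G^\ast$ — and this monotonicity of $\lambda_{2,2}$ under edge addition is false, for precisely the degenerate-cut reason you identify. Concretely, take $k=2$ and let $G$ be $K_5$ on $\{w_1,\dots,w_5\}$ together with $u_1$ adjacent only to $w_1$ and $u_2$ adjacent only to $w_2$; then $\lambda_{2,2}(G)=4$, so $L(G)$ is $4$-connected, and $G^\ast=G+u_1u_2$ satisfies $|E(G^\ast)\setminus E(G)|=1\le k$ and is not super edge-connected (the minimum edge-cut $\{u_1w_1,u_2w_2\}$ isolates no vertex), yet $\{u_1w_1,u_2w_2\}$ is a $2$-$2$-edge-cut of $G^\ast$, so $\lambda_{2,2}(G^\ast)=2$ and $L(G^\ast)$ is only $2$-connected: Theorem \ref{ext} cannot be invoked for $G^\ast$. (The theorem's conclusion still holds there, e.g. via $\tau'(G)\ge\tau(G-\{u_1,u_2\})=\tau(K_5)=2$, which is your fallback route.) So where your proposal is complete it is actually more careful than the paper, and where it is incomplete the paper's argument is broken too; closing the theorem in that last case genuinely requires something like the $\tau'$-estimate you outline, which neither you nor the paper establishes.
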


\begin{proof}
We may assume that $|V(G)| \geq 5$.
Suppose that $G$ is a star and $L(G)$ is $2k$-connected. 
Then $L(G) \cong K_n$ where $n \geq 2k+1$.
Let $V_1,V_2,\ldots,V_k \subset V(K_n)$
such that $V_i \cap V_j = \emptyset$ for any $i \neq j$ and 
$|V_i| = 2$ for all $1 \leq i \leq k$.
Clearly, $V_1,V_2,\ldots,V_k$ are $k$ disjoint connected dominating sets
of $K_n$ such that $\langle V_i,V_j \rangle_{K_n}$ is a cycle of order 4 for any $i \neq j$. 
Thus, from Theorem \ref{char-3}, $L(G)$ has
$k$ completely independent spanning trees. 

Suppose that $G$ is not a star, $L(G)$ is $2k$-connected, and 
there exists a graph $G^\ast \supseteq G$ with the conditions.
By Theorem \ref{con-L}, $\lambda_{2,2}(G) = \kappa(L(G))$.
Since $G^\ast$ is obtained from $G$ by adding new edges,
it holds that $\kappa(L(G^\ast)) = \lambda_{2,2}(G^\ast) \geq \lambda_{2,2}(G) \geq 2k$.
Hence, from Theorem \ref{ext}, 
$L(G^\ast)-(E(G^\ast) \setminus E(G)) = L(G)$ has $k$ completely independent spanning trees.
$\blacksquare$
\end{proof}

\bigskip

An edge-cut $X$ of a connected graph $G$ is {\em essential} if at least two components of $G-X$
are nontrivial. 
A graph is {\em essentially $k$-edge-connected} if it has no essential edge-cut
with fewer than $k$ edges \cite{LL}.
Note that an essential edge-cut is the same as a 2-2-edge-cut.
Lai and Li proved the following.
They also showed that both the lower bounds on $g$ and $h$ are tight.

\begin{theorem} {\rm \cite{LL} } \label{LL}
Let $k$, $g$, $h$ be positive integers such that $k < g < 2k$ and
$h \geq \frac{g^2}{g-k}-2$.
Then, every $g$-edge-connected and essentially $h$-edge-connected graph contains 
$k$ edge-disjoint spanning trees.
\end{theorem}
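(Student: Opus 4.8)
The plan is to prove the contrapositive through the Nash--Williams--Tutte partition theorem \cite{NW,T}: a graph $G$ has $k$ edge-disjoint spanning trees if and only if every partition $\mathcal{P}=\{V_1,\ldots,V_r\}$ of $V(G)$ satisfies
\[
e_G(\mathcal{P}) \geq k(r-1),
\]
where $e_G(\mathcal{P})$ denotes the number of edges joining distinct parts. So I assume $G$ has no $k$ edge-disjoint spanning trees and fix a partition with $e_G(\mathcal{P}) \leq k(r-1)-1$; since a one-part partition gives $e_G(\mathcal{P})=0$, we have $r \geq 2$. Among all such violating partitions I choose one with $r$ minimum. Minimality is the first lever: merging two parts $V_i,V_j$ produces a partition with $r-1$ parts and $e_G(\mathcal{P})-e(V_i,V_j)$ crossing edges, which cannot be violating, and therefore $e(V_i,V_j) \leq k-1$ for every pair $i \neq j$.

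The two connectivity hypotheses are then read off the cut sizes, where I write $d_i$ for the number of edges joining $V_i$ to the rest of $G$, so that $\sum_i d_i = 2e_G(\mathcal{P})$. Since $r \geq 2$, each such cut is an edge-cut of the $g$-edge-connected graph $G$, giving $d_i \geq g$ for all $i$. For essential edge-connectivity I examine the cut separating $V_i \cup V_j$ from the rest for a pair with $e(V_i,V_j) \geq 1$: the side $V_i \cup V_j$ contains an edge, and provided the complement also contains an edge this cut is essential and hence has at least $h$ edges; as its size is $d_i + d_j - 2e(V_i,V_j)$ with $e(V_i,V_j) \leq k-1$, I obtain $d_i + d_j \geq h + 2$ for every edge $ij$ of the contracted multigraph $H$ on the parts. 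The hypothesis $h \geq \frac{g^2}{g-k}-2$, that is $h+2 \geq \frac{g^2}{g-k}$, is exactly what upgrades this to a product form: minimizing $(d_i-k)(d_j-k)$ over $d_i,d_j \geq g$ with $d_i+d_j \geq \frac{g^2}{g-k}$ (a concave, endpoint-attained problem) yields $(d_i-k)(d_j-k) \geq k^2$ for every edge of $H$. In the degenerate case where the complement of $V_i \cup V_j$ is edgeless, every remaining part must be a singleton whose edges all enter $V_i \cup V_j$ and hence has degree at most $2(k-1)$, contradicting $d \geq g$ unless $g \leq 2k-2$, and the few surviving configurations are disposed of directly.

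It remains to reach a contradiction from $2e_G(\mathcal{P}) = \sum_i d_i$ together with $d_i \geq g$, the edge bound $(d_i-k)(d_j-k) \geq k^2$, the multiplicity bound $d_i \leq (r-1)(k-1)$, and $e_G(\mathcal{P}) \leq k(r-1)-1$; this final counting is where I expect the real work to lie. The structural key is that the low-degree parts, those with $d_i < 2k$, form an independent set $A$ of $H$, since an edge between two of them would give $(d_i-k)(d_j-k) < k^2$ (note $g<2k$ forces $d_i-k<k$). Writing $B$ for the complement, every edge meeting $A$ lands in $B$, so $\sum_{i \in A} d_i \leq \sum_{j \in B} d_j$, and each $B$-neighbour of an $A$-part is forced by $d_i+d_j \geq h+2$ to have large degree. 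The aim is to prove $\sum_i d_i \geq 2k(r-1)$ by balancing the deficit $\sum_{i \in A}(2k-d_i) \leq |A|(2k-g)$ against the surplus on $B$: when $|A|$ is large, $\sum_{j\in B} d_j \geq \sum_{i\in A} d_i \geq |A|\,g$ already suffices, and when $|A|$ is small the total deficit is below $2k$ and the nonnegative $B$-surplus covers it. The main obstacle is the intermediate regime, where neither bound alone is tight and the two must be combined quantitatively; here the identity $\frac{h+2}{2}-2k \geq \frac{(2k-g)^2}{2(g-k)}$, which follows from $h+2 \geq \frac{g^2}{g-k}$, shows that the per-part surplus available on $B$ always matches the per-part deficit $2k-g$ on $A$, and making this trade-off precise—while invoking the degree cap $(r-1)(k-1)$ from bounded multiplicity to prevent the degree mass from concentrating—is the heart of the argument.
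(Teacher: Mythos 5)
A preliminary remark: the paper never proves this statement --- Theorem \ref{LL} is imported verbatim from Lai and Li \cite{LL} --- so your proposal must stand entirely on its own, and it does not. The skeleton is sound and almost certainly close in spirit to the original partition-counting proof: Nash--Williams--Tutte, a violating partition with fewest parts, $e(V_i,V_j)\leq k-1$, $d_i\geq g$, $d_i+d_j\geq h+2$ for adjacent pairs whose outside contains an edge, the product bound $(d_i-k)(d_j-k)\geq k^2$ (your endpoint minimization is correct), and the independence of $A=\{i \,:\, d_i<2k\}$. But the argument stops exactly where you yourself locate ``the heart of the argument'': the final counting is never executed, and the one quantitative mechanism you offer for the intermediate regime is false. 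The guaranteed surplus of a $B$-part adjacent to $A$ is $d_j-2k\geq h+2-4k\geq (2k-g)^2/(g-k)$, and this does \emph{not} match the per-part deficit $2k-g$: whenever $g>3k/2$ one has $(2k-g)^2/(g-k)<2k-g$ (for $k=3$, $g=5$ it is $1/2$ versus $1$), and even when it does dominate, you would still need each $A$-part to have its own private $B$-neighbour, a Hall-type condition you never establish. Your two easy regimes cover only $|A|\leq 2k/(2k-g)$ and $|A|(g-k)\geq k(|B|-1)$, and the region between them is genuinely nonempty, so no contradiction is reached. The hole can be closed, but by a different accounting than the one you sketch: let every part $i$ with $d_i<2k$ pull charge $(2k-d_i)\,e(i,j)/d_i$ along each edge to each neighbour $j$. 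Then $i$ ends with exactly $2k$, while a part $j$ whose poorest poor neighbour has degree $d$ keeps at least $d_j\cdot 2(d-k)/d \geq \left(k+\frac{k^2}{d-k}\right)\cdot \frac{2(d-k)}{d} = 2k$ by your own product bound applied to that poorest neighbour, whence $\sum_i d_i\geq 2kr>2k(r-1)$, the desired contradiction. This proportional discharging is the missing idea; neither the deficit-versus-surplus balancing you describe nor the degree cap $(r-1)(k-1)$ produces it.

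The degenerate case is a second, smaller gap. If the complement of $V_i\cup V_j$ is edgeless, the remaining parts need not be singletons (any independent part with all its edges into $V_i\cup V_j$ qualifies), though your bound $d_l\leq 2(k-1)$ survives; the real problem is that for $g\leq 2k-2$ the ``surviving configurations'' are not killed by the constraints you retain in that case. For $k=4$, $g=5$, take four parts with $e(V_i,V_j)=1$, $e(V_{l_1},V_i)=e(V_{l_2},V_j)=3$, $e(V_{l_1},V_j)=e(V_{l_2},V_i)=2$, and no other edges: all part-degrees are at least $g$, all multiplicities at most $k-1$, yet there are only $11<k(r-1)=12$ crossing edges. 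Ruling this out requires returning to $G$ and applying essentiality to a \emph{different} pair --- here $\partial(V_{l_1}\cup V_i)$ has $5<h$ edges while both sides contain edges, so it is an essential cut that is too small --- i.e., the degenerate case needs the same pair analysis rerun on the non-degenerate pairs. That is a genuine sub-argument, not something ``disposed of directly.''
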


Combining Corollary \ref{line-com} and Theorems \ref{con-L} and \ref{LL}, we have the following.
Note that the condition on $G$ in Theorem \ref{main-2}
is complementary to that in Theorem \ref{ext}. 

\begin{theorem} \label{main-2}
For any $k \geq 2$, 
every $\left( \left\lceil \frac{\delta(G)^2}{\delta(G)-k} \right\rceil -2 \right)$-connected line graph
$L(G)$ has $k$ completely independent spanning trees if $G$ is super edge-connected 
and $k < \delta(G) < 2k$.
\end{theorem}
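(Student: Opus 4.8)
The plan is to reduce the statement to the existence of $k$ edge-disjoint spanning trees in $G$ and then quote Corollary \ref{line-com}, which guarantees $\tau(G)$ completely independent spanning trees in $L(G)$. Thus it suffices to prove $\tau(G) \geq k$ under the hypotheses, and the whole argument becomes a matter of feeding the right parameters into the Lai--Li bound, Theorem \ref{LL}.

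First I would translate the two structural hypotheses on $G$ into edge-connectivity data. Writing $\delta = \delta(G)$, super edge-connectedness gives $\lambda(G) = \delta$, so $G$ is $\delta$-edge-connected. Since $k \geq 2$ and $\delta > k$, we have $\delta \geq 3$, so $G$ is not a star and has order at least $4$; hence Theorem \ref{con-L} applies and yields $\kappa(L(G)) = \lambda_{2,2}(G)$. Recalling the remark that an essential edge-cut coincides with a $2$-$2$-edge-cut, this means $G$ is essentially $\kappa(L(G))$-edge-connected.

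Next I would match these to Theorem \ref{LL} by taking $g = \delta$ and $h = \lceil \frac{\delta^2}{\delta-k} \rceil - 2$. The hypothesis $k < \delta < 2k$ is exactly $k < g < 2k$, and the assumed connectivity $\kappa(L(G)) \geq \lceil \frac{\delta^2}{\delta-k}\rceil - 2$, together with $\kappa(L(G)) = \lambda_{2,2}(G)$, shows that $G$ is essentially $h$-edge-connected with $h \geq \frac{g^2}{g-k} - 2$. Theorem \ref{LL} then delivers $k$ edge-disjoint spanning trees in $G$, i.e.\ $\tau(G) \geq k$, and Corollary \ref{line-com} completes the proof.

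I do not expect a genuine obstacle; the content is a direct composition of the three cited results, and the real \emph{insight} is simply that super edge-connectedness is what lets one use the minimum degree $\delta$ as the edge-connectivity parameter $g$, so that $k < g < 2k$ becomes the stated degree window, while the connectivity of $L(G)$ supplies precisely the essential-edge-connectivity input $h$. The only care needed is bookkeeping: checking $\delta \geq 3$ so that Theorem \ref{con-L} applies, confirming that $h$ is a positive integer (which holds since $\frac{\delta^2}{\delta-k} \geq (k+1)^2$ already when $\delta = k+1$), and noting that the ceiling only strengthens the inequality $h \geq \frac{g^2}{g-k}-2$.
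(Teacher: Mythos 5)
Your proposal is correct and follows essentially the same route as the paper: use super edge-connectedness to get $\lambda(G)=\delta(G)$, invoke Theorem \ref{con-L} (after noting $\delta(G)\geq 3$ rules out stars) to identify $\kappa(L(G))$ with $\lambda_{2,2}(G)$, feed $g=\delta(G)$ and $h=\lceil \delta(G)^2/(\delta(G)-k)\rceil-2$ into Theorem \ref{LL} to obtain $\tau(G)\geq k$, and conclude via Corollary \ref{line-com}. The bookkeeping points you flag (non-star hypothesis, ceiling only strengthening the bound) are exactly the ones the paper's proof handles.
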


\begin{proof}
Suppose that $G$ is super edge-connected, $k < \delta(G) < 2k$ where $k \geq 2$, 
and $L(G)$ is $\left(\left\lceil \frac{\delta(G)^2}{\delta(G)-k} \right\rceil-2 \right)$-connected.
Since $G$ is super edge-connected, it holds that $\lambda(G) = \delta(G)$.
Since $\delta(G) > k \geq 2$, $|V(G)| \geq 4$ and $G$ is not a star.
Thus, by Theorem \ref{con-L}, 
$\lambda_{2,2}(G) = \kappa(L(G)) \geq \left\lceil \frac{\delta(G)^2}{\delta(G)-k} \right\rceil-2$.
Hence, $G$ is $\delta(G)$-edge-connected and essentially 
$\left( \left\lceil \frac{\delta(G)^2}{\delta(G)-k} \right\rceil-2 \right)$-edge-connected.
Therefore, by Corollary \ref{line-com} and Theorem \ref{LL}, $L(G)$ has $k$ completely 
independent spanning trees.
$\blacksquare$
\end{proof}

\bigskip

Now let $L(G)$ be $(k^2+2k-1)$-connected and $\delta(G) \geq k+1$, where $k \geq 2$. 
If $G$ is not super edge-connected or $\delta(G) \geq 2k$,
then from Theorem \ref{ext}, $L(G)$ has $k$ completely independent spanning trees.
Suppose that $G$ is super edge-connected and $k < \delta(G) < 2k$. 
It can be checked that 
the function $f(x) = \frac{x^2}{x-k}$ with domain $\mathbb{R} \setminus \{ k \}$ 
is monotonically decreasing in the interval $k < x < 2k$.
Thus, it holds that $(k+1)^2 \geq \frac{\delta(G)^2}{\delta(G)-k} > 4k$ for $k < \delta(G) < 2k$,
i.e., $k^2+2k-1 \geq \left\lceil \frac{\delta(G)^2}{\delta(G)-k} \right\rceil -2$
for $k < \delta(G) < 2k$.
Therefore, from Theorem \ref{main-2}, 
$L(G)$ also has $k$ completely independent spanning trees.
Hence, Theorem \ref{main-3} holds.

\bigskip

\section*{Acknowledgements}

This work was supported by JSPS KAKENHI Grant Number JP19K11829.

\end{document}